\newif\ifdetails
\newcommand{\DETAIL}[1]%
{\ifdetails\par\fbox{\begin{minipage}{0.9\linewidth}\textit{Detail:}
      #1\end{minipage}}\par\fi}
\newcommand{\TODO}[1]%
{\ifdetails\par\fbox{\begin{minipage}{0.9\linewidth}\textbf{TODO:}
      #1\end{minipage}}\par\fi}
\newtheorem{lemma}{Lemma}
\newtheorem{proposition}[lemma]{Proposition}
\newtheorem{theorem}[lemma]{Theorem}
\theoremstyle{remark}
\newcommand{\old}[1]{{}}
\title[\tiny{Graphs/unicyclic graphs with extremal number of connected induced subgraphs}]{Graphs and unicyclic graphs with extremal number of connected induced subgraphs}
\author{Audace A. V. Dossou-Olory}
\thanks{The author was supported by the University of Johannesburg. This work started while the author was still affiliated to Stellenbosch University}
\address{Audace A. V. Dossou-Olory \\ Department of Pure and Applied Mathematics \\ University of Johannesburg \\ P.O. Box 524, Auckland Park, Johannesburg 2006\\ South Africa}
\email{audace@aims.ac.za}
\subjclass[2010]{Primary 05C30; secondary 05C35, 05C05}
\keywords{induced subgraphs, connected graphs, unicyclic graphs, tadpole graphs, extremal graph structures, path, complete graph}
\begin{document}

\begin{abstract}
Over all graphs (or unicyclic graphs) of a given order, we characterise those graphs that minimise or maximise the number of connected induced subgraphs. For each of these classes, we find that the graphs that minimise the number of connected induced subgraphs coincide with those that are known to maximise the Wiener index (the sum of the distances between all unordered pairs of vertices), and vice versa. For every $k$, we also determine the connected graphs that are extremal with respect to the number of $k$-vertex connected induced subgraphs. We show that, in contrast to the minimum which is uniquely realised by the path, the maximum value is attained by a rich class of connected graphs.
\end{abstract}

\maketitle

\section{Introduction and selected previous results}

Counting and understanding graph structures with particular properties has many applications, especially to network theory, computer science, biology and chemistry. For instance, graphs can represent biological networks at the molecular or species level (protein interactions, gene regulation, etc.)~\cite{lemons2011hierarchical}. The topological structure of an interconnection network is a \textit{connected} graph where, for example, vertices are processors and edges represent links between them~\cite{chen2009induced}. In chemical networks, vertices are atoms and edges represent their bonds. An important question is to find all matches of a specific motif within a larger network (the \textit{subgraph} isomorphism problem~\cite{cordella2004sub}, or the \textit{induced subgraph} isomorphism problem~\cite{chen2008understanding}). Both cases are known to be in general NP-complete~\cite{garey1979guide}, although in some instances (such as planar graphs), efficient algorithms are available~\cite{eppstein2002subgraph}. A step to these problems usually consists of enumerating all possible subgraphs or induced subgraphs of the network~\cite{wernicke2006fanmod}.

This paper discusses the number of connected induced subgraphs of a finite and simple (no loops, no parallel edges, undirected) graph with a particular emphasis on connected induced subgraphs of a connected graph. Let $G$ be a simple graph consisting of a finite (but not empty) set $V(G)$ of vertices and a finite set $E(G)$ of edges. A graph $H$ such that $\emptyset \neq V(H) \subseteq V(G)$ and $E(H) \subseteq E(G)$ is called a subgraph of $G$ (we do not consider the empty graph!). A graph formed from $G$ by taking a nonempty subset $S$ of vertices of $G$ and all edges incident with the vertices in $S$ is called an induced subgraph of $G$. In this paper, we are concerned with the problem of determining the minimum and maximum number of subgraphs or connected induced subgraphs of $G$, and also characterising the extremal graphs. This problem will be considered for certain types of graphs all sharing the same number of vertices.

\medskip
A graph with no edge is called an \textit{edgeless} graph, while a graph with an edge between every pair of distinct vertices is called a \textit{complete} graph. It is trivial that among all graphs having $n$ vertices, precisely the edgeless graph $E_n$ has the minimum number of subgraphs, and the maximum number of subgraphs is uniquely attained by the complete graph $K_n$. Distinct vertices $u,v \in V(G)$ are said to be connected in $G$ if there is a path from $u$ to $v$ in $G$. The graph $G$ is connected if and only if any two distinct vertices of $G$ are connected in $G$. Note that the notions of subgraph and induced subgraph coincide for the edgeless graph only. On the other hand, adding an edge $e$ between two nonadjacent vertices of a graph increases its number of connected subgraphs by at least one (namely, the graph induced by the endvertices of $e$ itself). Thus, the complete graph remains the only graph of order $n$ having the maximum number of connected subgraphs. The problem becomes more interesting when one considers the number of connected subgraphs of a connected graph. 

\medskip
Tittmann et al.~\cite{tittmann2011enumeration} enumerated the number of connected components in induced subgraphs by means of a generating function approach. Yan and Yeh~\cite{yan2006enumeration} gave a linear-time algorithm for counting the sum of weights of subtrees of a tree (connected acyclic graph). In~\cite{yan2006enumeration}, Yan and Yeh also asked for methods to enumerate connected subgraphs of a connected graph. Very recently, Kroeker et al.~\cite{kroeker2017meane} investigated the extremal structures for the mean order of connected induced subgraphs among the so-called cographs (graphs containing no induced path of order $4$). Our main interest in this paper is to know the minimum and maximum number of connected induced subgraphs that a connected graph of a given order can contain; the approach we use here does not involve generating functions. The enumeration of connected induced subgraphs of a fixed order $k$ is also important in many applications~\cite{kashani2009kavosh}, as eg. when solving certain fixed-cardinality optimisation problems~\cite{komusiewicz2015algorithmic}. Very recently, Komusiewicz and Sommer~\cite{komusiewicz2019enumerating} studied algorithms for the problem of enumerating all $k$-vertex connected induced subgraphs of a graph.

We remark that the notions of connected subgraph and connected induced subgraph coincide for trees only. This follows from the simple fact that every connected graph of order $n$ has at least $n-1$ edges with equality if and only if the graph is a tree. It is known (see Sz{\'e}kely-Wang~\cite{szekely2005subtrees}) that among all trees of order $n$, the path (resp. star) minimises (resp. maximises) the number of subtrees. 
	
\begin{theorem}[\cite{szekely2005subtrees}]\label{Szek:Theo}
The $n$-vertex path has $\binom{n+1}{2}$ subtrees, fewer than any other tree of order $n$. The $n$-vertex star has $n-1+2^{n-1}$ subtrees, more than any other tree of order $n$.
\end{theorem}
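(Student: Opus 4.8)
The plan is to get the two exact counts by direct enumeration and then the two extremal statements by leaf-deletion inductions. For the exact values: a connected subgraph of the path $P_n$ is a subpath, i.e.\ it is determined by a nonempty interval of consecutive vertices, and there are $\binom{n}{2}+n=\binom{n+1}{2}$ of these; for the star one splits the subtrees according to whether they contain the centre, a subtree avoiding the centre being a single leaf ($n-1$ of them) and a subtree containing the centre being the centre together with an arbitrary set of leaves ($2^{n-1}$ of them). It is convenient to use throughout that in a tree every connected subgraph is induced and is determined by its vertex set, so that $f(T):=\#\{\text{subtrees of }T\}=\#\{S\subseteq V(T):S\neq\emptyset,\ T[S]\text{ connected}\}$; write $a_k(T)$ for the number of such $S$ with $|S|=k$, so $f(T)=\sum_{k=1}^{n}a_k(T)$, and let $N(T,u)$ denote the number of connected subsets containing a fixed vertex $u$.

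For the minimum I would prove the stronger pointwise bound $a_k(T)\ge n-k+1$ for every tree $T$ of order $n$ and every $1\le k\le n$, by induction on $n$ for fixed $k$ (base case $n=k$). Deleting a leaf $v$ with neighbour $u$, the $k$-vertex subtrees of $T$ avoiding $v$ are exactly the $k$-vertex subtrees of $T-v$, while those containing $v$ are the sets $\{v\}\cup S$ with $S$ a $(k-1)$-vertex connected subset of $T-v$ containing $u$; since $T-v$ is connected on $n-1\ge k-1$ vertices there is at least one such $S$, whence $a_k(T)\ge a_k(T-v)+1\ge n-k+1$. Summing over $k$ gives $f(T)\ge\sum_{k=1}^{n}(n-k+1)=\binom{n+1}{2}$. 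Equality forces $a_k(T)=n-k+1$ for every $k$; taking $k=n-1$ shows $T$ has exactly two vertices whose deletion leaves a connected graph, i.e.\ exactly two leaves, i.e.\ $T=P_n$.

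For the maximum I would induct on $n$ as well. Fixing a leaf $v$ with neighbour $u$ and putting $T'=T-v$, the same dichotomy gives the identity $f(T)=f(T')+1+N(T',u)$. By the induction hypothesis $f(T')\le 2^{\,n-2}+(n-1)-1$, and trivially $N(T',u)\le 2^{\,n-2}$ (there are only that many subsets of $V(T')$ containing $u$), so $f(T)\le 2^{\,n-1}+n-1$. For equality both bounds must be tight: tightness of the first forces $T'=K_{1,n-2}$ by induction, and tightness of the second forces every subset of $V(T')$ containing $u$ to be connected, in particular $u$ adjacent to every other vertex of $T'$, so $u$ is the centre of $T'$ (for $n\ge 4$; the cases $n\le 3$ are checked by hand). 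Reattaching $v$ at $u$ then gives $T=K_{1,n-1}$, and conversely the star attains the bound.

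The routine part is the two direct counts and the leaf-deletion identities; the part needing care is the equality analysis, especially confirming in the maximum case that tightness of $N(T',u)\le 2^{\,n-2}$ genuinely identifies $u$ as the centre of $T'$, together with the small orders for which a star has more than one vertex adjacent to all others. One should also double-check the elementary fact used in the minimum case that a connected graph on $m$ vertices has, for each vertex $u$ and each $1\le t\le m$, a connected $t$-subset containing $u$.
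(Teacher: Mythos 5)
Your proof is correct. Note, however, that the paper itself offers no proof of this statement: it is imported verbatim from Sz\'ekely and Wang \cite{szekely2005subtrees} and used as a black box, so there is no in-paper argument to compare against. Your two direct counts for $P_n$ and $S_n$ are right, the leaf-deletion identities $a_k(T)=a_k(T-v)+\#\{S\}$ and $f(T)=f(T')+1+N(T',u)$ are sound, and the equality analyses (exactly two non-cut vertices forces a path; all $2^{n-2}$ subsets through $u$ connected forces $u$ adjacent to everything, hence a star) do identify the extremal trees, modulo the small-order checks you flag. The one elementary fact you rightly say needs verification -- that a connected graph on $m$ vertices has a connected $t$-subset through any prescribed vertex for every $1\le t\le m$ -- holds by growing a set from $u$ along a spanning tree, so the induction step $a_k(T)\ge a_k(T-v)+1$ is safe. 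A pleasant by-product of your route is that the pointwise bound $a_k(T)\ge n-k+1$ is exactly the inequality the paper separately cites as Lemma~\ref{Jam:Lem} (from Jamison), so your argument recovers both imported results at once; what it does \emph{not} give without extra work is the uniqueness of $P_n$ for each fixed $k$ with $2<k<n$ separately, since you only extract uniqueness from the total sum, but that is not part of the statement you were asked to prove.
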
		

For every $2<k<n $, it is also known (see Lemma~5.1 in~Jamison~\cite{Jamison1983} or Lemma~5.2.2 in~\cite{Mol2016}) that paths uniquely minimise the number of $k$-vertex subtrees among all $n$-vertex trees.

\begin{lemma}[\cite{Jamison1983}]\label{Jam:Lem}
For every $2<k<n $, the $n$-vertex path has fewer subtrees of order $k$ than any other tree of order $n$. 
\end{lemma}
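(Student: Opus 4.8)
The plan is to prove the equivalent sharper-sounding statement: for every tree $T$ of order $n$ with $T\neq P_n$ and every $k$ with $2<k<n$, the number $a_k(T)$ of subtrees of $T$ of order $k$ satisfies $a_k(T)>a_k(P_n)$. Since a connected subgraph of $P_n$ on $k$ vertices is a sub-path, $a_k(P_n)=n-k+1$; also $a_2(T)=|E(T)|=n-1$ and $a_n(T)=1$ for every tree, which is exactly why a strict inequality is possible (and asserted) only for $k\in\{3,\dots,n-1\}$. I would induct on $n$, the base case $n\le 4$ being trivial since the only non-path tree there is $K_{1,3}$, with $a_3(K_{1,3})=3>2=a_3(P_4)$.

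For $n\ge 5$, I would settle the two extreme values of $k$ directly. For $k=3$: a $3$-vertex subtree is a path determined by its centre $v$ and an unordered pair of neighbours of $v$, so $a_3(T)=\sum_{v}\binom{\deg_T(v)}{2}$; among all sequences of $n$ positive integers with sum $2(n-1)$, the multiset $(2,\dots,2,1,1)$ uniquely minimises $\sum_i\binom{d_i}{2}$ (by a smoothing argument: an entry $\ge 3$ forces an entry equal to $1$, and replacing this pair $(d,1)$ by $(d-1,2)$ keeps the sum and strictly decreases $\sum_i\binom{d_i}{2}$), so $a_3(T)>n-2=a_3(P_n)$ because $T$ has a vertex of degree $\ge 3$. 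For $k=n-1$: an $(n-1)$-vertex subtree is obtained by deleting one leaf, so $a_{n-1}(T)$ equals the number of leaves of $T$, which is at least $3>2=a_{n-1}(P_n)$ since $T$ is not a path.

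In the remaining range $3<k<n-1$ (so $n\ge 6$), I would delete a carefully chosen leaf. The key observation is that a non-path tree on at least $5$ vertices has a leaf $u$ for which $T-u$ is still not a path: otherwise every leaf of $T$ other than $u$ would remain a leaf of the resulting path and hence be one of its two endpoints, forcing $T$ to have exactly three leaves, i.e.\ to be a spider with three legs; but then deleting a leaf at the end of a longest leg would again leave a spider, a non-path, unless $T=K_{1,3}$. With such a leaf $u$, put $T'=T-u$ and let $w$ be the neighbour of $u$. Splitting the order-$k$ subtrees of $T$ according to whether they contain $u$, and using that $u$ is a leaf, gives $a_k(T)=a_k(T')+b_{k-1}(T',w)$, where $b_{k-1}(T',w)$ is the number of order-$(k-1)$ subtrees of $T'$ containing $w$; since $T'$ is connected and $1\le k-1<|V(T')|$, we have $b_{k-1}(T',w)\ge 1$. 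As $T'\neq P_{n-1}$ and $2<k<n-1$, the induction hypothesis gives $a_k(T')>(n-1)-k+1$, whence $a_k(T')\ge n-k+1$ and $a_k(T)\ge n-k+2>n-k+1=a_k(P_n)$.

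The delicate points are the existence of a leaf whose deletion preserves non-path-ness (the spider analysis) and the two boundary values $k=3,n-1$, which do not reduce to a smaller instance and must be handled by hand; the rest is bookkeeping. An alternative that avoids the case split on $k$ is a local-transformation argument: if $v$ has degree at least $3$ and carries two pendant paths of orders $s\le t$, then merging them into one pendant path of order $s+t$ at $v$ yields a tree on $n$ vertices with $\deg v$ reduced by $1$, and one can check that this strictly decreases $a_k$ for every $k\in\{3,\dots,n-1\}$ (writing the difference as a sum over subtree shapes, the change among subtrees lying inside the pendant path(s) cancels exactly against the ``smallest'' term among subtrees that use $v$, while the remaining terms are nonnegative with at least one positive); iterating such merges reduces every degree to at most $2$ and so terminates at $P_n$.
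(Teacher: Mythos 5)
The paper does not prove this lemma at all: it is quoted verbatim from Jamison (Lemma~5.1 of \cite{Jamison1983}) and Mol's thesis, so there is no in-paper argument to compare against. Your proof is, as far as I can check, a correct and self-contained derivation of the cited result, which is more than the paper offers. The structure is sound: the induction on $n$ is well-founded because the two boundary values are disposed of without recursion --- $k=3$ via $a_3(T)=\sum_v\binom{\deg_T(v)}{2}$ together with the $(d,1)\mapsto(d-1,2)$ smoothing (which strictly decreases the sum by $d-2>0$ and terminates at the path's degree sequence), and $k=n-1$ via the count of leaves --- while every intermediate $k$ satisfies $2<k<n-1$ and hence falls under the inductive hypothesis for order $n-1$. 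The one genuinely delicate point, the existence of a leaf whose deletion leaves a non-path, is handled correctly: if every leaf deletion produced a path then $T$ would have exactly three leaves and be a three-leg spider, and removing the tip of a longest leg of such a spider on at least $5$ vertices again leaves a spider. The decomposition $a_k(T)=a_k(T')+b_{k-1}(T',w)$ with $b_{k-1}(T',w)\ge 1$ is also right, since a leaf-containing subtree of order $k\ge 2$ must contain the leaf's neighbour. The closing paragraph's pendant-path-merging alternative is only a sketch (the claimed cancellation is asserted, not verified), but it is not needed, so the proof stands on the inductive argument alone.
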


Section~\ref{Sec:NkG} essentially contains extremal results on the number of $k$-vertex connected induced subgraphs of a connected graph of order $n$. It is shown that the path (resp. complete graph) provides the minimum (resp. maximum) number of $k$-vertex connected induced subgraphs among all connected graphs of order $n$. Moreover, we show that the maximum value is attained by all $n$-vertex graphs that result from removing exactly $l \leq n/2$ independent edges (edges sharing no common vertex) in the complete graph $K_n~(n\geq 3)$. Section~\ref{Sec:Uni} is concerned with a particularly well studied class of tree-like structure as a sole subject. We consider (connected) unicyclic graphs of a given order and investigate which unicyclic graphs minimise or maximise the number of connected induced subgraphs.

\medskip
A \textit{unicyclic} graph is a connected graph which contains exactly one cycle. The number of unicyclic graphs of a fixed order $n>2$ begins $$1, 2, 5, 13, 33, 89, 240, 657, 1806, 5026, 13999, 39260, 110381, 311465, 880840, \ldots$$ For more information on this sequence, see the On-Line Encyclopedia of Integer Sequences~\cite{oeis2018} under~\url{A001429}. Clearly, the complete graph is no longer extremal among unicyclic graphs of order $n> 3$ ($K_3$ is the only unicyclic graph of order $3$). By $G_{3,n-3}$, we mean the connected graph obtained by identifying a vertex of $K_3$ with a leaf of the path of order $n-2$. By $Q_n$, we mean the graph obtained by joining two leaves of the $n$-vertex star by an edge. It will be shown that $G_{3,n-3}$ is the unicylic graph of order $n$ with the smallest number of connected induced subgraphs (Theorem~\ref{MinforUnic}), while $Q_n$ is the unicylic graph of order $n$ with the greatest number of connected induced subgraphs (Theorem~\ref{MaxUnic}). 

It occurs very often that a certain tree is extremal within a given class of trees with respect to several graph invariants (the number of subtrees and the Wiener index, for instance). This also holds in our current context: the unicyclic graphs that are found to be extremal (see Section~\ref{Sec:Uni}) for the number of connected induced subgraphs were previously shown to be extremal for the Wiener index and the energy (among others). A number of other graph invariants were studied in various subclasses of unicyclic graphs. This includes the energy (sum of the absolute values of the eigenvalues) and two closely related parameters, namely the Merrifield-Simmons index (number of independent sets) and the Hosoya index (number of matchings). Hou~\cite{hou2001unicyclic} determined the unicyclic graphs of order $n$ with the minimal energy. Li and Zhou~\cite{li2008minimal} found the graphs with the minimal energy among all unicyclic graphs of order $n$ and diameter $d$.  Hou, Gutman and Woo~\cite{hou2002unicyclic} characterised the unicyclic bipartite graphs (that are not cycle) of order $n$ having the maximal energy. Andriantiana~\cite{andriantiana2011unicyclic} extended \cite{hou2002unicyclic}  by determining all unicyclic bipartite graphs of order $n>15$ having the maximal energy. Andriantiana and Wagner~\cite{andriantianaWagner2011unicyclic} also found the $n$-vertex non-bipartite unicyclic graphs with the maximum energy. Pedersen and Vestergaard~\cite{pedersen2005number} determined sharp lower and upper bounds for the Merrifield-Simmons index in a unicyclic graph of order $n$. They also found the unicyclic graphs that are maximal with respect to the Merrifield-Simmons index, given order and girth. Ou~\cite{ou2009extremal} characterised the unicyclic graphs of order $n$ having the largest as well as the second-largest Hosoya index. Zhu and Chen~\cite{zhu2011merrifield} determined the unicyclic graphs with prescribed girth and number of pendant vertices that have the maximal Merrifield-Simmons index.

Among unicyclic graphs with prescribed order (and potentially other structural restrictions), the largest and second-largest energies are usually attained by cycles and the so-called tadpole graphs (obtained by merging a vertex of a cycle with a pendant vertex of a path). Among all unicyclic graphs of order $n\geq 6$, the minimum energy is attained by the graph that results from connecting two leaves of a star by an edge. These extremal graphs will also play an important role in our current context of determining the number of connected induced subgraphs of a unicyclic graph of order $n$.

\medskip
The results in this paper can be generalised to arbitrary graphs with prescribed order and number of connected components (see Section~\ref{conclude}). In Section~\ref{conclude}, we also suggest further questions on the number of connected induced subgraphs for future investigation. Shortly before a first version of this paper appeared online, a paper by Pandey and Patra~\cite{Pandey2018} which studies the number of connected (not necessarily induced) subgraphs of graphs and unicyclic graphs, was posted on the arXiv. The extremal graphs in this paper turn out to be the same as those found in~\cite{Pandey2018} for large enough order.

\section{Preliminaries}\label{Sec:NkG}

Both the edgeless graph $E_n$ and the complete graph $K_n$ remain extremal for the number of connected induced subgraphs among all graphs of order $n$.

\begin{proposition}\label{connGraphs}	Every graph of order $n$ has at least $n$ connected induced subgraphs (with equality for the edgless graph $E_n$ only) and at most $2^n -1$ connected induced subgraphs (with equality for the complete graph $K_n$ only).
\end{proposition}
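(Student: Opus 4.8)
The plan is to establish both bounds by counting induced subgraphs according to their vertex sets, and then to pin down the equality cases by exhibiting an ``extra'' connected induced subgraph (for the lower bound) or a ``missing'' one (for the upper bound) whenever $G$ differs from the claimed extremal graph.

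For the lower bound, I would first observe that each of the $n$ singleton subsets of $V(G)$ induces a (trivially) connected subgraph, so the number of connected induced subgraphs is at least $n$. For the equality characterisation, suppose $G$ is not edgeless and pick an edge $uv\in E(G)$. Then the subgraph induced by $\{u,v\}$ is connected and has two vertices, hence is distinct from all $n$ one-vertex subgraphs, so $G$ has at least $n+1$ connected induced subgraphs. Therefore equality forces $E(G)=\emptyset$, i.e.\ $G=E_n$; conversely, for $E_n$ an induced subgraph is connected precisely when it has a single vertex, so $E_n$ has exactly $n$ of them.

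For the upper bound, every induced subgraph of $G$ is determined by its nonempty vertex set, and there are exactly $2^n-1$ nonempty subsets of $V(G)$; hence $G$ has at most $2^n-1$ connected induced subgraphs. For equality, if $G$ is not complete, choose nonadjacent vertices $u,v$; the subgraph induced by $\{u,v\}$ has no edge and is therefore disconnected, so at least one nonempty vertex set fails to contribute and the count drops strictly below $2^n-1$. Conversely, in $K_n$ every induced subgraph is itself complete, hence connected, so all $2^n-1$ nonempty vertex sets contribute.

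I do not expect a genuine obstacle here; the only point needing a little care is the equality analysis, where one should produce a concrete witnessing subgraph (the one induced by a single edge, respectively by a single non-edge) rather than argue via a global counting inequality. I would also note that this is essentially a formal write-up of the remarks made just before the statement.
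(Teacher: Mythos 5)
Your proposal is correct and follows essentially the same argument as the paper: count singletons for the lower bound and exhibit an edge-induced subgraph when $G\neq E_n$, then count nonempty vertex subsets for the upper bound and exhibit a disconnected two-vertex induced subgraph when $G\neq K_n$. No meaningful differences.
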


\begin{proof}
Every single vertex of a graph $G$ induces a connected subgraph of $G$. Clearly, the only connected induced subgraphs of the edgeless graph are its vertices. If $G$ is a graph of order $n$ which is not isomorphic to $E_n$, then $G$ has at least one edge $e$; so the endvertices of $e$ induce a $2$-vertex connected subgraph of $G$. This proves the minimisation part of the proposition. For the maximum, it is obvious that every induced subgraph of $K_n$ is connected; so $K_n$ has $2^n -1$ connected induced subgraphs. If $G$ is a graph of order $n$ which is not isomorphic to $K_n$, then $G$ has at least two nonadjacent vertices $u,v$: the subgraph induced by $\{u,v\}$ is not connected. This proves the proposition.
\end{proof}
		
Before we can continue, we need to introduce further terminologies and notations. For a connected graph $G$, we denote by $N_k(G)$ the number of $k$-vertex connected induced subgraphs of $G$, and by $N(G)$ the total number of connected induced subgraphs of $G$. By deleting a vertex $u \in V(G)$, we mean removing $u$ and all edges incident with $u$ in $G$. In particular, the subgraph induced by a (nonempty) set $W \subseteq V(G)$ is obtained by deleting in $G$ all vertices that do not belong to $W$. If $e \in E(G)$, then $G-e$ stands for the graph obtained from $G$ by removing edge $e$ in $G$ (but leaving the two endvertices of $e$).

From this point onwards, $G$ is always a connected graph. A very basic observation is that $N_1(G)=|V(G)|$, $N_2(G)=|E(G)|$ and $N_{|V(G)|}(G)=1$. It is important to note that all induced subgraphs (not necessarily connected) of order $|V(G)|-1$ of $G$ can be obtained by deleting one vertex from $G$, giving $N_{|V(G)|-1}(G) \leq |V(G)|$. One can even be more precise: a cut vertex of $G$ is a vertex of $G$ the deletion of which renders $G$ disconnected. Thus, $N_{|V(G)|-1} (G)$ is precisely the number of non-cut vertices of $G$. If $v_0,v_1,\ldots,v_{k-1} \in V(G)$, then we write $G-\{v_0,v_1,\ldots,v_{k-1}\}$ to mean the induced subgraph obtained from $G$ by deleting vertices $v_0,v_1,\ldots,v_{k-1}$. For $u,v \in V(G)$, we denote by $N(G)_{u}$ (resp. $N(G)_{u,v}$) the number of connected induced subgraphs of $G$ that contain $u$ (resp. both $u$ and $v$). The cycle of length $k \geq 3$ is denoted by $C_k=(v_0,v_1,\ldots,v_{k-1},v_k=v_0)$, where vertex $v_j$ is adjacent to vertex $v_{j+1}$ in $C_k$ for every $j \in \{0,1,\ldots,k-1\}$. 

\medskip
Our next theorem shows that the $n$-vertex path $P_n$ has very few connected $k$-vertex induced subgraphs, and this is in fact the only graph having the minimum number of connected $k$-vertex induced subgraphs among all connected graphs of order $n$. 

\begin{theorem}\label{MaxNk}
Every connected graph of order $n$ has at least $n-k+1$ connected induced subgraphs of order $k$, with equality holding (in the case $2< k<n$) only for the path $P_n$.
\end{theorem}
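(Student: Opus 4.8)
The plan is to separate the bound from the equality case and to reduce the bound to the known result for trees. For the lower bound, let $G$ be a connected graph of order $n$ and fix a spanning tree $T$ of $G$. If a vertex set $S$ induces a connected subgraph of $T$, then it also induces a connected subgraph of $G$, because $G[S]$ contains $T[S]$ as a spanning subgraph; hence $N_k(G)\ge N_k(T)$. In a tree, the vertex sets inducing a connected subgraph are exactly the vertex sets of subtrees (uniqueness of paths), so $N_k(T)$ is the number of $k$-vertex subtrees of $T$. By Lemma~\ref{Jam:Lem} this is at least $n-k+1$ when $2<k<n$, the path $P_n$ (with exactly $n-k+1$ subpaths of order $k$) being the unique minimiser; and $N_k(T)\ge n-k+1$ is immediate for $k\in\{1,2,n\}$, since $N_1(T)=n$, $N_2(T)=|E(T)|=n-1$ and $N_n(T)=1$. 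Hence $N_k(G)\ge n-k+1$.

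For the equality case, assume $2<k<n$ and $N_k(G)=n-k+1$. Chasing equality through the inequalities just used forces $N_k(T)=n-k+1$, so Lemma~\ref{Jam:Lem} gives $T\cong P_n$; write $v_1v_2\cdots v_n$ for this spanning path of $G$. Each $k$-subset of the form $\{v_a,v_{a+1},\dots,v_b\}$ induces a connected subpath of $P_n$, hence a connected subgraph of $G$, and there are exactly $n-k+1$ of these; so if $G$ had even one further connected induced subgraph of order $k$ we would get $N_k(G)\ge n-k+2$, a contradiction. It thus suffices to show that if $G$ has an edge $v_iv_j$ with $i<j$ not lying on the path (so $j\ge i+2$), then there is a $k$-subset $S$ with $G[S]$ connected and $S$ not a block of consecutive indices. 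Put $m:=j-i+1\ge 3$.

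If $k<m$, take $S=\{v_i\}\cup\{v_{j-k+2},v_{j-k+3},\dots,v_j\}$; this has $k$ vertices, $G[S]$ is connected along $v_i-v_j-v_{j-1}-\cdots-v_{j-k+2}$ (the chord $v_iv_j$ together with path edges), and $k<m$ forces $i<j-k+1$, so the index $j-k+1$ lies strictly between indices of $S$ yet is absent from $S$, whence $S$ is not an interval. If $k\ge m$, start from $S_0=\{v_i,v_{i+2},v_{i+3},\dots,v_j\}$, which has $m-1$ vertices, induces the connected graph $v_i-v_j-v_{j-1}-\cdots-v_{i+2}$, and omits $v_{i+1}$; enlarge $S_0$ to a $k$-set $S$ by appending a contiguous block $v_{i-1},v_{i-2},\dots$ running left from $v_i$ along the path and/or a contiguous block $v_{j+1},v_{j+2},\dots$ running right from $v_j$. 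There are $(i-1)+(n-j)=n-m$ path vertices available for this, and $k\le n-1$ gives $n-m\ge k-m+1$, so the enlargement is possible; $G[S]$ stays connected, and $v_{i+1}$ remains absent (the appended indices are all $<i$ or $>j\ge i+2$), so $S$ is not an interval. In either case $S$ witnesses $N_k(G)\ge n-k+2$, a contradiction, so $G$ has no chord and $G\cong P_n$.

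I expect the lower bound to be routine; the real content is the equality case, and its delicate point is manufacturing, out of a single chord, a connected $k$-set that is not an interval of the spanning path — one must handle both ``$k$ at most the length of the cycle $v_i\cdots v_jv_i$'' and ``$k$ larger'', which is why the argument splits at $k=m$. The secondary points to check are that the enlargement in the second case cannot re-insert $v_{i+1}$ and that enough path vertices survive outside $\{v_i,\dots,v_j\}$; the latter is exactly where the hypothesis $k<n$ is used.
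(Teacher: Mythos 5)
Your proposal is correct, and its first half (spanning tree plus Lemma~\ref{Jam:Lem}) is exactly the paper's reduction; the difference lies entirely in how strictness is extracted when $G\not\cong P_n$. The paper splits into two cases: if $G\cong C_n$ it compares $C_n$ with $C_n-e=P_n$ directly, and otherwise it invokes the existence of a spanning tree of $G$ that is \emph{not} a path, so that the strict inequality in Lemma~\ref{Jam:Lem} finishes the argument. You instead allow the chosen spanning tree to be a path and, from any chord $v_iv_j$, explicitly manufacture a connected $k$-set that is not an interval of the path, splitting at $k=m$ where $m=j-i+1$; your index bookkeeping in both subcases checks out (in particular $k<m$ guarantees $v_i\notin\{v_{j-k+2},\dots,v_j\}$ and that $v_{j-k+1}$ is a missing interior index, and $k\le n-1$ guarantees enough vertices outside $\{v_i,\dots,v_j\}$ for the enlargement). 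What your route buys is self-containedness: the paper's non-cycle case quietly relies on the fact that a connected graph other than a path or a cycle admits a non-path spanning tree, which is true but is not proved there, whereas your argument needs nothing beyond the chord itself and also subsumes the cycle case ($i=1$, $j=n$) without separate treatment. The cost is length: the paper's case split is two lines once that spanning-tree fact is granted.
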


\begin{proof}
Let $G$ be a connected graph of order $n$. The cases $k\in \{1,2,n\}$ are trivial since $N_1(G)=n,~N_n(G)=1$ and $N_2(G) \geq n-1$ with equality if and only if $G$ is a tree. Assume that $2<k<n$ and that $G$ is not isomorphic to $P_n$. If $G$ is isomorphic to the cycle $C_n$, then removing an edge $e$ from $G$ yields the path $P_n=G-e$, while $G=C_n$ has a connected induced subgraph of every order $k$ that contains the edge $e$. Thus, we obtain $N_k (G)> N_k (P_n)$. If $G$ is not isomorphic to $C_n$, then $G$ has a spanning tree $T$ which is not isomorphic to $P_n$ and thus $N_k (G) \geq N_k (T)$. By Lemma~\ref{Jam:Lem}, we have $N_k (T) > N_k (P_n)=n-k+1$. This completes the proof of the theorem. 
\end{proof}

In particular, the path $P_n$ which has $\binom{n+1}{2}$ connected induced subgraphs, is the only graph that minimises the number of connected induced subgraphs among all connected graphs of order $n$. Note that the identity $N(P_n)=\binom{n+1}{2}$ was already mentioned in Theorem~\ref{Szek:Theo}. The maximum (analogue of Theorem~\ref{MaxNk}) can be attained by a rich class of connected graphs. We recall that $N_{|V(G)|-1}(G) \leq |V(G)|$. Equality holds if and only if $G$ is $2$-connected (the cycle and the complete graph, for instance).		

Denote by $\mathcal{G}^l_n$ the set of all nonisomorphic graphs that result from removing exactly $l \leq n/2$ independent edges (edges sharing no common vertex) in the complete graph $K_n~(n\geq 3)$. It is not difficult to see that every graph in $\mathcal{G}^l_n$ is of order $n$ and connected. In general, we have the following:

\begin{proposition}
For every connected graph $G$ of order $n\geq 3$ and every $k \in \{3,4,\ldots,n\}$, the number of $k$-vertex connected induced subgraphs of $G$ is at most $\binom{n}{k}$. Equality holds for all graphs $G \in \mathcal{G}^l_n$ for every $l \leq n/2$.
\end{proposition}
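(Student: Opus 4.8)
The plan is to settle the inequality by an immediate counting argument and then to verify the equality case by a short structural observation about complete graphs with a matching removed.

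First I would observe that each connected induced subgraph of $G$ of order $k$ is uniquely determined by its vertex set, which is one of the $\binom{n}{k}$ subsets of $V(G)$ of size $k$. Hence $N_k(G)\le\binom{n}{k}$ for every $k$ (indeed for every graph, not just connected ones), which proves the bound. The content of the proposition is therefore the equality statement.

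For that, fix $G\in\mathcal{G}^l_n$ and write $G=K_n-M$, where $M=\{e_1,\dots,e_l\}$ is a matching on $V(G)$ (the $e_i$ are pairwise independent by definition of $\mathcal{G}^l_n$). Let $k\in\{3,4,\dots,n\}$ and let $S\subseteq V(G)$ with $|S|=k$. The induced subgraph $G[S]$ is exactly $K_k$ with those edges of $M$ having both endpoints in $S$ deleted; being a subset of $M$, these edges again form a matching. So it is enough to prove the following claim: a complete graph on $k\ge 3$ vertices with an arbitrary matching deleted is connected. Granting the claim, every $k$-subset of $V(G)$ induces a connected subgraph, so $N_k(G)=\binom{n}{k}$, as required.

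To prove the claim, consider $H=K_k-M'$ with $M'$ a matching and $k\ge 3$, and pick two vertices $x,y$ of $H$. If $xy\notin M'$ then $x$ and $y$ are adjacent in $H$. If $xy\in M'$, then since $k\ge 3$ there is a third vertex $z$; as $M'$ is a matching and $x$ is already matched to $y$, the vertex $z$ is matched (if at all) to a vertex distinct from both $x$ and $y$, hence $z$ is adjacent in $H$ to each of $x$ and $y$, giving a path from $x$ to $y$ through $z$. Thus $H$ is connected. I do not anticipate a genuine obstacle here; the only points needing care are that the deleted edges inside a chosen $k$-set still form a matching (immediate, being a subset of $M$) and that the hypothesis $k\ge 3$ is essential, since for $k=2$ the two endpoints of a removed edge induce a disconnected subgraph and equality already fails there.
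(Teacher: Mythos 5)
Your proof is correct. The paper states this proposition without any proof (it is presented as evident after the definition of $\mathcal{G}^l_n$), and your argument supplies exactly the intended justification: the bound $N_k(G)\le\binom{n}{k}$ is immediate since a connected induced subgraph is determined by its vertex set, and for the equality case every $k$-subset of $V(K_n-M)$ induces $K_k$ minus a matching, which is connected for $k\ge 3$ because any two vertices are either adjacent or joined through a third vertex unmatched to both. Your closing remark that $k\ge 3$ is essential (equality fails at $k=2$) correctly explains why the proposition restricts to $k\in\{3,\dots,n\}$.
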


\section{The extremal unicyclic graphs of order $n$}\label{Sec:Uni}
In this section, we deal with lower and upper bounds on the number of connected induced subgraphs of a unicylic graph of order $n$, and also characterise all cases of equality. 

\subsection{The minimum number of connected induced subgraphs}

Before we can state our next theorem, we need to go through some preparation.

\begin{lemma}\label{FormCycle}
The cycle $C_n$ has $n^2 -n + 1$ connected induced subgraphs.
\end{lemma}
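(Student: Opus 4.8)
The plan is to count the connected induced subgraphs of $C_n$ directly, sorting them by order. Let $C_n=(v_0,v_1,\ldots,v_{n-1},v_n=v_0)$. A nonempty set $W\subseteq V(C_n)$ induces a connected subgraph precisely when $W$ is either the whole vertex set or consists of the vertices of a single ``arc'' of consecutive vertices along the cycle; these arcs are exactly the vertex sets of the induced subpaths of $C_n$. So the count splits into two contributions: the connected induced subgraphs of order $n$ (there is exactly one, namely $C_n$ itself), and those of order $k$ for each $1\le k\le n-1$, which are in bijection with the arcs of length $k$.

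First I would count the arcs. For each $k$ with $1\le k\le n-1$, an arc of $k$ consecutive vertices is determined by its starting vertex $v_j$, and all $n$ choices $j\in\{0,1,\ldots,n-1\}$ give distinct vertex sets (this uses $k\le n-1$, so no two starting points yield the same set); hence there are exactly $n$ connected induced subgraphs of order $k$ for each such $k$. Summing over $k=1,\ldots,n-1$ gives $n(n-1)$ connected induced subgraphs of order at most $n-1$. Adding the single subgraph of order $n$ yields
\[
N(C_n)=n(n-1)+1=n^2-n+1,
\]
as claimed. (As a sanity check against the notation introduced earlier, this says $N_k(C_n)=n$ for $1\le k\le n-1$ and $N_n(C_n)=1$, consistent with $N_1(C_n)=|V(C_n)|=n$, $N_2(C_n)=|E(C_n)|=n$, and $N_{n-1}(C_n)=n$ since $C_n$ has no cut vertex.)

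There is essentially no obstacle here; the only point requiring a word of care is the claim that the connected induced subgraphs of $C_n$ other than $C_n$ itself are exactly the induced subpaths (arcs). This holds because deleting any vertex from $C_n$ leaves a path, so any proper nonempty induced subgraph is an induced subgraph of a path and is therefore connected if and only if its vertex set is a set of consecutive vertices; conversely every such arc clearly induces a path. With that observation in place, the enumeration above is complete.
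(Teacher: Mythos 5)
Your proof is correct and follows essentially the same route as the paper's: both identify the connected induced subgraphs of $C_n$ with arcs of consecutive vertices, count $n$ of them for each order $k$ with $1\le k\le n-1$, and add the single subgraph of order $n$ to get $n(n-1)+1=n^2-n+1$. Your extra remark justifying that proper connected induced subgraphs are exactly the arcs is a welcome bit of care but does not change the argument.
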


\begin{proof}
Let $C_n=(v_0,v_1,\ldots,v_{n-1},v_n=v_0)$ be the cycle of order $n$. Then a subset $S$ of $k$ elements of $V(C_n)$ induces a connected subgraph if and only if the vertices in $S$ can be arranged in the unique form $$
(v_j,v_{(j+1)\hspace*{-0.2cm}\mod n},v_{(j+2)\hspace*{-0.2cm}\mod n},\ldots,v_{(j+k-1)\hspace*{-0.2cm}\mod n})$$ for some $j \in \{0,1,\ldots,n-1\}$. This representation fails to be unique if and only if vertex $v_{(j+k-1)\hspace*{-0.1cm}\mod n}$ is adjacent to vertex $v_j$, which only happens when $k=n$. Therefore, $C_n$ has precisely $n$ connected induced subgraphs of every order $k$ between $1$ and $n-1$, while it has only one connected induced subgraph of order $n$ (the subgraph $C_n$ itself). 
\end{proof}

The tadpole graph $G_{p,q}$ is the connected graph obtained by identifying a vertex of the cycle $C_p$ with a pendant vertex of the path $P_{q+1}$. So the order of $G_{p,q}$ is $p+q$. 

\begin{lemma}\label{FormTrap}
The tadpole graph $G_{p,q}$ has
\begin{align*}
\binom{p}{2}+\binom{q+1}{2}+\frac{(q+1)(p^2-p+2)}{2}
\end{align*}
connected induced subgraphs.
\end{lemma}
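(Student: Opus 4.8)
The plan is to partition the connected induced subgraphs of $G_{p,q}$ according to whether or not they meet the path-portion of the tadpole, and for those that do, according to whether they use the identification vertex. Write $G_{p,q}$ as $C_p$ (with vertices $v_0,\dots,v_{p-1}$) together with a path $v_0=u_0,u_1,\dots,u_q$ attached at $v_0$. A connected induced subgraph $H$ of $G_{p,q}$ is determined by its vertex set $S$; since $G_{p,q}$ has exactly one cycle, $S$ induces a connected subgraph if and only if $S\cap V(C_p)$ induces a path or all of $C_p$, $S\cap\{u_0,\dots,u_q\}$ is an interval $\{u_0,\dots,u_j\}$ or a ``suffix'' interval, and the two pieces are glued compatibly at $v_0$. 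Concretely I would split into three cases: (i) $S$ uses no vertex of the path-tail except possibly $v_0$ — i.e. $S\subseteq V(C_p)$ — contributing the $N(C_p)=p^2-p+1$ connected induced subgraphs of the cycle (Lemma~\ref{FormCycle}); (ii) $S$ lies entirely in the tail $\{u_0,u_1,\dots,u_q\}=P_{q+1}$ but with $S\not\subseteq\{v_0\}$ and not already counted — actually it is cleaner to count subgraphs contained in the tail that \emph{avoid} $v_0$, namely the connected induced subgraphs of $P_q$, of which there are $\binom{q+1}{2}$; (iii) $S$ contains $v_0$ and at least one further tail vertex, hence $S\cap\{u_0,\dots,u_q\}=\{u_0,\dots,u_i\}$ for some $1\le i\le q$, and $S\cap V(C_p)$ is a sub-path of $C_p$ through $v_0$ or all of $C_p$.

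The main computation is case (iii). For a fixed $i\in\{1,\dots,q\}$ (that is, the tail part is exactly $\{u_0,\dots,u_i\}$), I need the number of sub-paths of $C_p$ that contain $v_0$, plus one for taking all of $C_p$. The sub-paths of $C_p$ through $v_0$ of length $\ell$ (i.e. with $\ell\ge 1$ vertices) number: $1$ of $1$ vertex, and for $2\le \ell\le p-1$ there are $\ell$ of them (choose how many of the $\ell$ vertices lie on each side of $v_0$), for a total of $1+\sum_{\ell=2}^{p-1}\ell=1+\big(\binom{p}{2}-1\big)=\binom{p}{2}$; adding the single choice ``all of $C_p$'' gives $\binom{p}{2}+1=\frac{p^2-p+2}{2}$ choices of cycle-part per value of $i$. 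Multiplying by the $q$ admissible values of $i$ yields $\frac{q(p^2-p+2)}{2}$ from case (iii). I would double-check the bookkeeping so that the single-vertex set $\{v_0\}$ is counted once (it naturally sits in case (i)) and nothing in cases (i)–(iii) overlaps.

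Summing the three contributions gives
\begin{align*}
N(G_{p,q}) &= (p^2-p+1) + \binom{q+1}{2} + \frac{q(p^2-p+2)}{2}\\
&= \binom{p}{2} + \binom{q+1}{2} + \frac{(q+1)(p^2-p+2)}{2},
\end{align*}
where the last equality is the routine algebraic identity $p^2-p+1+\tfrac{q(p^2-p+2)}{2}=\binom{p}{2}+\tfrac{(q+1)(p^2-p+2)}{2}$ (both sides equal $\tfrac{(q+1)(p^2-p+2)}{2}+\binom{p}{2}$ since $p^2-p+1=\binom{p}{2}+\tfrac{p^2-p+2}{2}$). That matches the claimed formula.

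The step I expect to be the main obstacle is making the case analysis genuinely exhaustive and mutually exclusive: one must argue carefully that every connected induced subgraph falls into exactly one of (i), (ii), (iii), using that the unique cycle forces $S\cap V(C_p)$ to be a path (or the whole cycle) and that connectivity across the cut vertex $v_0$ forces the tail part to be the initial segment $\{u_0,\dots,u_i\}$ whenever $v_0\in S$ and the tail is touched. Once that structural dichotomy is nailed down, the counting is elementary — the cycle count is quoted from Lemma~\ref{FormCycle}, the path counts from Theorem~\ref{Szek:Theo} (the $n$-vertex path has $\binom{n+1}{2}$ subtrees, equivalently connected induced subgraphs), and the rest is the arithmetic above.
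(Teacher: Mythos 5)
Your proposal is correct and rests on the same idea as the paper's proof: decompose at the cut vertex $v_0$, count the $v_0$-containing subgraphs as a product of a $v_0$-containing arc of $C_p$ (there are $\tfrac{p^2-p+2}{2}$ of them, which you verify directly) with an initial segment of the tail, and handle the $v_0$-avoiding subgraphs via the path counts $\binom{p}{2}+\binom{q+1}{2}$. The only difference is a cosmetic regrouping (you keep all of $N(C_p)$ together and compensate with the factor $q$ instead of $q+1$), and your final algebraic reconciliation is correct.
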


\begin{proof}
Consider the tadpole graph $G_{p,q}$ as depicted in Figure~\ref{TadpoleGpq}. To evaluate $N(G_{p,q})$, we distinguish between subgraphs of $G_{p,q}$ that contain vertex $v_0$ and subgraphs of $G_{p,q}$ that do not contain $v_0$.
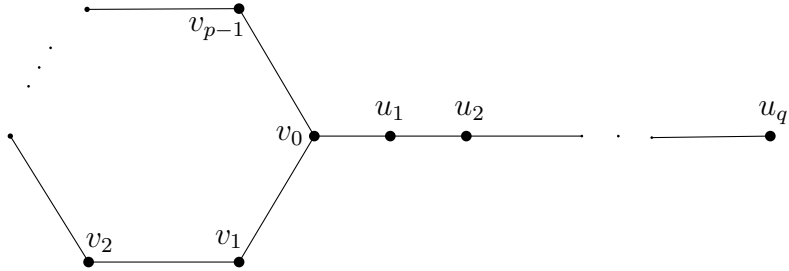
\begin{figure}[htbp]\centering
	\begin{tikzpicture}[line cap=round,line join=round,>=triangle 45,x=1.0cm,y=1.0cm]
	
	\draw (6.35,6.27) node[anchor=north west] {$v_0$};
	\draw (5.55,4.9) node[anchor=north west] {$v_1$};
	\draw (3.85,4.85) node[anchor=north west] {$v_2$};
	\draw (5.2,7.72) node[anchor=north west] {$v_{p-1}$};
	\draw (7,6)-- (8,6);
	\draw (8,6)-- (9,6);
	\draw (9,6)-- (10.52,6);
	\draw (11.44,5.98)-- (13,6);
	\draw (7.65,6.6) node[anchor=north west] {$u_1$};
	\draw (8.7,6.6) node[anchor=north west] {$u_2$};
	\draw (12.7,6.6) node[anchor=north west] {$u_q$};
	\draw (7,6)-- (6.01,7.69);
	\draw (7,6)-- (6.01,4.33);
	\draw (6.01,4.33)-- (4.03,4.33);
	\draw (4.03,4.33)-- (3,6);
	\draw (6.01,7.69)-- (4.01,7.68);
	
	\fill [color=black] (7,6) circle (2.0pt);
	\fill [color=black] (8,6) circle (2.0pt);
	\fill [color=black] (9,6) circle (2.0pt);
	\fill [color=black] (13,6) circle (2.0pt);
	\fill [color=black] (10.52,6) circle (0.5pt);
	\fill [color=black] (11.44,5.98) circle (0.5pt);
	\fill [color=black] (11,6) circle (0.5pt);
	\fill [color=black] (6.01,7.69) circle (2.0pt);
	\fill [color=black] (6.01,4.33) circle (2.0pt);
	\fill [color=black] (4.03,4.33) circle (2.0pt);
	\fill [color=black] (3,6) circle (1.0pt);
	\fill [color=black] (4.01,7.68) circle (1.0pt);
	\fill [color=black] (3.38,6.91) circle (0.5pt);
	\fill [color=black] (3.24,6.66) circle (0.5pt);
	\fill [color=black] (3.53,7.17) circle (0.5pt);
	\end{tikzpicture}
	\caption{The tadpole graph $G_{p,q}$.}\label{TadpoleGpq}
\end{figure}
Since deleting $v_0$ in $G_{p,q}$ yields the two connected components $P_{p-1}$ and $P_q$, we deduce by Theorem~\ref{MaxNk} that $$N(G_{p,q}-\{v_0\})=
N(P_{p-1})+N(P_q)=\binom{p}{2}+\binom{q+1}{2}.$$ On the other hand, every $v_0$-containing connected induced sugraph of $G_{p,q}$ is uniquely determined by merging a $v_0$-containing connected induced subgraph of $C_p$ and a $v_0$-containing connected induced subgraph of $P_{q+1}$ at vertex $v_0$ (a fixed pendant vertex of $P_{q+1}$). Thus, we have
$$
N(G_{p,q})_{v_0}=\frac{(q+1)(p^2-p+2)}{2}.
$$
Indeed, $P_{q+1}$ has exactly $q+1$ subtrees containing a fixed pendant vertex, while by the proof of Lemma~\ref{FormCycle}, we have $N(C_p)_{v_0}=p^2 -p + 1 - \binom{p}{2}$. This completes the proof of the lemma.
\end{proof}

We shall also need the following simple lemma about trees.

\begin{lemma}\label{rootsubt}
The number of root-containing subtrees of a rooted tree $T$ is at least the order of $T$, with equality if and only if $T$ is a path rooted at one of its pendant vertices.
\end{lemma}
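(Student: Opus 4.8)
The plan is to prove both the lower bound and the characterisation of equality simultaneously by induction on the order $n$ of the rooted tree $T$, with root $r$. For the base case $n=1$, the tree consists of the root alone, it has exactly one root-containing subtree (itself), and it is trivially a path rooted at its (only) pendant vertex, so both claims hold.

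For the inductive step, suppose $|V(T)|=n\geq 2$ and the statement holds for all rooted trees of smaller order. Let $d$ be the degree of the root $r$ in $T$, and let $T_1,\ldots,T_d$ be the components (rooted at the neighbours $r_1,\ldots,r_d$ of $r$) obtained by deleting $r$. Every root-containing subtree $S$ of $T$ is obtained by choosing, for each $i$, either the empty selection or a root-containing subtree of $T_i$, and then attaching all chosen pieces to $r$; conversely every such choice yields a root-containing subtree of $T$. Hence, writing $a_i$ for the number of root-containing subtrees of $T_i$, we get the product formula
\begin{equation*}
N(T)_r=\prod_{i=1}^{d}(a_i+1).
\end{equation*}
By the induction hypothesis, $a_i\geq |V(T_i)|$ for each $i$. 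If $d=1$, then $N(T)_r=a_1+1\geq |V(T_1)|+1=n$, so the bound holds. If $d\geq 2$, then using $a_i\geq 1$ for all $i$ together with $a_j\geq |V(T_j)|$ we obtain, by expanding the product and discarding all but two terms,
\begin{equation*}
N(T)_r=\prod_{i=1}^{d}(a_i+1)\geq 1+\sum_{i=1}^{d}a_i\geq 1+\sum_{i=1}^{d}|V(T_i)|=1+(n-1)=n,
\end{equation*}
which again gives the desired lower bound.

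It remains to analyse equality. Equality $N(T)_r=n$ forces, when $d\geq 2$, that every inequality above is tight; in particular $\prod_{i=1}^{d}(a_i+1)=1+\sum_{i=1}^d a_i$, which for $d\geq 2$ and all $a_i\geq 1$ can only happen if $d=2$ and $a_1=a_2=1$ — but $a_i=1$ forces $|V(T_i)|=1$ by the induction hypothesis, so $n=1+1+1=3$ and $T$ is a path on three vertices rooted at its centre, which has $N(T)_r=1\cdot(1+1)(1+1)=4\neq 3$, a contradiction. (More directly: $d\geq2$ with all $a_i\geq|V(T_i)|\geq1$ already yields a strict inequality unless the product collapses, which it cannot for $d\ge2$.) Hence equality forces $d=1$, and then $N(T)_r=a_1+1=n=|V(T_1)|+1$ forces $a_1=|V(T_1)|$, so by the induction hypothesis $T_1$ is a path rooted at a pendant vertex; appending $r$ as a new pendant vertex adjacent to the root of $T_1$ then shows $T$ is a path rooted at one of its pendant vertices. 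Conversely, if $T$ is a path $r=w_0,w_1,\ldots,w_{n-1}$ rooted at the pendant vertex $w_0$, then a root-containing subtree is exactly a prefix $w_0,\ldots,w_{j}$ for some $0\leq j\leq n-1$, giving $N(T)_r=n$. The only genuine subtlety is the equality discussion for $d\geq 2$; once one observes that the product $\prod(a_i+1)$ with all factors at least $2$ strictly exceeds $1+\sum a_i$ as soon as $d\geq 2$, the argument closes cleanly.
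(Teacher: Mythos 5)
Your proof is correct, but it takes a genuinely different route from the paper. The paper's argument is a one-line injection: for each vertex $u$ the unique $v$--$u$ path is a root-containing subtree, which immediately gives $N(T)_v\geq |V(T)|$; and if $T$ is not a path rooted at a leaf, then $T$ itself is not of the form ``path from $v$ to some $u$,'' so it was not among the subtrees counted, forcing strict inequality. You instead induct on the order using the branch decomposition at the root and the product formula $N(T)_r=\prod_{i=1}^{d}(a_i+1)$, then compare the product with $1+\sum_i a_i$. Both are sound; the paper's argument is shorter and handles the equality case in one stroke, while your product formula is more machinery but also more reusable --- it is essentially the same decomposition the paper later exploits (implicitly) in Lemma~\ref{FormTrap}, Proposition~\ref{upperTr} and the proof of Theorem~\ref{MaxUnic}, and it quantifies how quickly branching inflates the count. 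One presentational nit in your equality analysis for $d\geq 2$: the assertion that $\prod_{i}(a_i+1)=1+\sum_i a_i$ ``can only happen if $d=2$ and $a_1=a_2=1$'' is not the right intermediate claim --- since the expansion of the product contains the additional term $a_1a_2\geq 1$, equality is simply impossible for $d\geq 2$, as your own parenthetical remark then correctly observes; you should lead with that direct observation rather than the detour through the $a_1=a_2=1$ subcase. A second, very minor point (shared with the paper) is the degenerate base case $n=1$, where ``path rooted at a pendant vertex'' has to be read as a convention since the single vertex has degree $0$.
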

	
\begin{proof}
Let $T$ be a tree with root $v$. For $u \in V(T)$, the unique path between $v$ and $u$ in $T$ is a subtree of $T$. Thus, $T$ has at least $|V(T)|$ subtrees containing $v$. If $T$ is not a path, or if $T$ is a path but $v$ is not a leaf of $T$, then the tree $T$ itself was not one of the $|V(T)|$ subtrees counted before. In this case, we get strict inequality, that is $N(T)_v> |V(T)|$.	
\end{proof}

Our next result shows that the tadpole graph $G_{3,n-3}$ is the unique unicyclic graph of order $n$ with the minimum number of connected induced subgraphs. It is interesting to point out that the tadpole graph $G_{3,n-3}$ is also known to maximise the Wiener index (the sum of the distances between all unordered pairs of vertices) among all unicyclic graphs of a given order; see~\cite{hong2011wiener,tang2008n,yu2010wiener}. 

\begin{theorem}\label{MinforUnic}
If $G$ is a unicylic graph of order $n\geq 3$, then $N(G) \geq (n^2 + 3n - 4)/2 $, with equality if and only if $G$ is isomorphic to $G_{3,n-3}$.	
\end{theorem}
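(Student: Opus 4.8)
The plan is to prove the theorem by induction on $n$, peeling off a pendant vertex. For $n=3$ the only unicyclic graph is $K_3=G_{3,0}$, with $N(K_3)=7=(9+9-4)/2$. Let $n\ge 4$ and assume the statement for $n-1$. If $G=C_n$, then $N(G)=n^2-n+1$ by Lemma~\ref{FormCycle} and $n^2-n+1-\tfrac{n^2+3n-4}{2}=\tfrac{(n-2)(n-3)}{2}>0$, so $C_n$ is strictly beaten by $G_{3,n-3}$; hence we may assume $G$ has a pendant vertex $\ell$ (a connected graph on at least two vertices all of whose degrees are $\ge 2$ is a cycle). Let $w$ be the neighbour of $\ell$, so that $G-\ell$ is unicyclic of order $n-1$. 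Sorting the connected induced subgraphs of $G$ by whether they avoid $\ell$, equal $\{\ell\}$, or contain both $\ell$ and $w$ — the last kind being in bijection, via deletion of $\ell$, with the connected induced subgraphs of $G-\ell$ that contain $w$ — we obtain the identity $N(G)=N(G-\ell)+1+N(G-\ell)_w$.

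Everything then hinges on the following estimate, which I would establish by a spanning-tree argument: every vertex $w$ of a unicyclic graph $H$ of order $m\ge 3$ lies in at least $m+1$ connected induced subgraphs of $H$. Let $C$ be the cycle of $H$ and $z\in V(C)$ the vertex of $C$ nearest to $w$; since $|V(C)|\ge 3$ there is an edge $e^*\in E(C)$ not incident with $z$, and $T:=H-e^*$ is a spanning tree. For each $u\in V(H)$ the vertex set $S_u$ of the $w$--$u$ path in $T$ induces a connected subgraph of $H$ containing $w$, and the $m$ sets $S_u$ are pairwise distinct (rooting $T$ at $w$: either one of $u,u'$ is an ancestor of the other, giving proper containment, or $u\in S_u\setminus S_{u'}$). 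Finally, $V(C)$ together with the vertices of the $w$--$z$ path of $T$ induces one further connected subgraph through $w$, and it is none of the $S_u$: if $V(C)\subseteq S_u$, then — a path in a tree having no chords — $V(C)$ would form an interval of the path $T[S_u]$ whose induced sub-path equals $T[V(C)]=C-e^*$, and that interval is entered from the $w$-side at $z$ (deleting one cycle edge leaves the cycle vertex nearest $w$ unchanged), forcing $z$ to be an endvertex of $C-e^*$, hence of $e^*$, contrary to the choice of $e^*$.

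Combining this with the identity, the induction hypothesis $N(G-\ell)\ge N(G_{3,n-4})$, the key lemma $N(G-\ell)_w\ge n$, and the relation $N(G_{3,n-3})-N(G_{3,n-4})=n+1$ (read off from Lemma~\ref{FormTrap}) gives $N(G)\ge N(G_{3,n-4})+1+n=N(G_{3,n-3})=(n^2+3n-4)/2$. For the equality characterisation, equality forces both $N(G-\ell)=N(G_{3,n-4})$ — whence $G-\ell\cong G_{3,n-4}$ by induction — and $N(G-\ell)_w=n$. To pin down $w$, I would compute $N(G_{3,m-3})_x$ over the vertex types of $G_{3,m-3}$ (a graph of order $m$): the far pendant vertex attains exactly $m+1$ (its connected induced subgraphs are the $m-3$ proper final segments of the pendant path, together with the whole pendant path extended by one of the four connected subgraphs of the triangle at its root), while the remaining two cycle vertices give $2m-2$, the root of the path gives $4m-8$, and any interior path vertex gives at least $2m-3$, all exceeding $m+1$ once $m\ge 4$. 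Hence $w$ is forced to be the far pendant vertex of $G_{3,n-4}$ (for $n=4$, $G_{3,n-4}=K_3$ and any vertex serves), and attaching $\ell$ there produces exactly $G_{3,n-3}$; conversely $G_{3,n-3}$ realises the bound by Lemma~\ref{FormTrap}, closing the induction.

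The one genuinely non-routine step is the key lemma $N(H)_w\ge m+1$: one must exhibit the single ``extra'' cyclic connected induced subgraph through $w$ and argue it differs from all $m$ of the tree-path ones, and the device making this go through is the choice of the deleted cycle edge $e^*$ away from the cycle vertex nearest $w$. The equality analysis is elementary but does require the small case-check above, namely that no vertex of $G_{3,m-3}$ other than the far pendant vertex lies in only $m+1$ connected induced subgraphs.
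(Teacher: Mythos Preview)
Your proof is correct and shares the paper's inductive skeleton: peel off a pendant vertex, use the identity $N(G)=N(G-\ell)+1+N(G-\ell)_w$, and invoke the key bound that a vertex of an $m$-vertex unicyclic graph lies in at least $m+1$ connected induced subgraphs. Where you diverge is in how that bound is established and how equality is pinned down. The paper works directly with the cycle-plus-rooted-trees decomposition (Figure~\ref{ShapeUnic}): it explicitly lists $b_0+a_1+\cdots+a_{k-1}+1$ subgraphs through the pendant vertex $v$ and its cycle root $v_0$, invokes Lemma~\ref{rootsubt} to get $a_j\ge |V(T_j)|$ and $N(T_0)_v\ge |V(T_0)|$, and then reads off the equality conditions ($k=3$, $|V(T_1)|=|V(T_2)|=1$, $T_0$ a path with $v_0$ a leaf) from the very same chain of inequalities. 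Your spanning-tree device---$m$ root-to-vertex paths plus one cyclic set, with the deleted cycle edge chosen away from the nearest cycle vertex---is more self-contained (no auxiliary lemma on rooted trees) and yields the bound for an arbitrary vertex rather than only a pendant; the cost is that equality does not come for free, so you must separately compute $N(G_{3,m-3})_x$ over all vertex types to force $w$ to be the far leaf. Both routes are clean: the paper's buys a tight coupling between the inequality and its equality case, yours buys a single elementary counting argument independent of Lemma~\ref{rootsubt}.
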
 

\begin{proof}
By Lemma~\ref{FormTrap}, we have $$
N(G_{3,n-3})=\frac{(n-1)(n+4)}{2}=\frac{n^2 + 3n - 4}{2}\,.
$$ The statement is seen to hold for $n=3$ as $C_3=G_{3,0}$ is the only unicyclic graph of order $3$, and $N(C_3)=7$. By Lemma~\ref{FormCycle}, we have $N(C_n)=n^2-n+1$ and so it is easy to see that $$
n^2-n+1=N(C_n) > \frac{n^2 + 3n - 4}{2} =N(G_{3,n-3})
$$ provided that $n \neq 3$. For $n>3$, let $G \neq C_n$ be a unicyclic graph of order $n$. It is clear that $G$ has at least one pendant vertex (otherwise, $G$ is a cycle). Let $v$ be a pendant vertex of $G$ so that $G-\{v\}$ is a unicyclic graph of order $n-1$. We then induct on $n$ to prove that $N(G)\geq (n-1)(n+4)/2$ with equality holding only for $G_{3,n-3}$.

Consider the unique cycle $C_k=(v_0,v_1,v_2,\ldots, v_{k-1}, v_k=v_0)$ of $G$. For every $j \in \{0,1,2,\ldots,k-1\}$, the subgraph $T_j$ of $G$ depicted in Figure~\ref{ShapeUnic} is a tree rooted at vertex $v_j$.
\begin{figure}[htbp]\centering
\begin{tikzpicture}[line cap=round,line join=round,>=triangle 45,x=1.0cm,y=1.0cm]
\draw (8.38,8.27) node[anchor=north west] {$v_0$};
\draw (7.57,6.9) node[anchor=north west] {$v_1$};
\draw (5.83,6.84) node[anchor=north west] {$v_2$};
\draw (7.22,9.68) node[anchor=north west] {$v_{k-1}$};
\draw (9.3,7.86) node[anchor=north west] {$T_0$};
\draw (8.2,6.1) node[anchor=north west] {$T_1$};
\draw (4.5,6.52) node[anchor=north west] {$T_2$};
\draw (8.2,10.6) node[anchor=north west] {$T_{k-1}$};
\draw (9,8)-- (8,6.36);
\draw (6.01,6.37)-- (8,6.36);
\draw (6.01,6.37)-- (5,8);
\draw (9,8)-- (8.03,9.67);
\draw (8.03,9.67)-- (5.91,9.66);
\draw [rotate around={-88.78:(8.01,10.33)},dash pattern=on 1pt off 1pt] (8.01,10.33) ellipse (0.7cm and 0.2cm);
\draw [rotate around={0.07:(9.64,8)},dash pattern=on 1pt off 1pt] (9.64,8) ellipse (0.66cm and 0.18cm);
\draw [rotate around={-89.87:(8,5.68)},dash pattern=on 1pt off 1pt] (8,5.68) ellipse (0.72cm and 0.22cm);
\draw [rotate around={36.22:(5.5,6)},dash pattern=on 1pt off 1pt] (5.5,6) ellipse (0.67cm and 0.24cm);

\fill [color=black] (9,8) circle (2.0pt);
\fill [color=black] (8,6.36) circle (2.0pt);
\fill [color=black] (6.01,6.37) circle (2.0pt);
\fill [color=black] (5,8) circle (0.5pt);
\fill [color=black] (8.03,9.67) circle (2.0pt);
\fill [color=black] (5.91,9.66) circle (0.5pt);
\fill [color=black] (5.25,9.01) circle (0.5pt);
\fill [color=black] (5.45,9.34) circle (0.5pt);
\fill [color=black] (5.03,8.73) circle (0.5pt);
\fill [color=black] (8,11) circle (0.5pt);
\fill [color=black] (10.28,8) circle (0.5pt);
\fill [color=black] (8,5) circle (0.5pt);
\fill [color=black] (4.99,5.62) circle (0.5pt);
\fill [color=black] (8.14,10.85) circle (0.5pt);
\fill [color=black] (10.15,7.88) circle (0.5pt);
\fill [color=black] (7.84,5.17) circle (0.5pt);
\fill [color=black] (5.16,5.57) circle (0.5pt);
\end{tikzpicture}
\caption{The general shape of a (connected) unicyclic graph.}\label{ShapeUnic}
\end{figure}
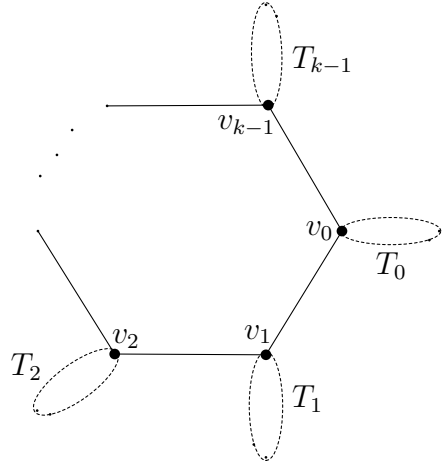	
We can assume (without loss of generality) that $v \neq v_0$ is a leaf of $T_0$. Denote by $a_j$ the number of $v_j$-containing subtrees of $T_j$ and by $S_{j,1},S_{j,2},\ldots,S_{j,a_j} $ all the corresponding $a_j$ subtrees. Let $S_{0,1},S_{0,2},\ldots,S_{0,b_0}$ be those subtrees of $T_0$ that contain both $v$ and $v_0$. It follows immediately that the subgraphs induced by 
\begin{align}\label{Essent}
\begin{split}
&V(S_{0,1}), V(S_{0,2}),\ldots, V(S_{0,b_0})\,, \\
&V(T_0) \cup V(S_{1,1}), V(T_0) \cup V(S_{1,2}), \ldots, V(T_0) \cup V(S_{1,a_1}), \\ 
& V(T_0) \cup V(T_1) \cup V(S_{2,1}), V(T_0) \cup V(T_1) \cup V(S_{2,2}), \ldots,\\
& V(T_0) \cup V(T_1) \cup \cdots \cup V(T_{k-2}) \cup V(S_{k-1,1}), V(T_0) \cup V(T_1) \cup \cdots \cup V(T_{k-2}) \cup V(S_{k-1,2}), \\
& \ldots, V(T_0) \cup V(T_1) \cup \cdots \cup V(T_{k-2}) \cup V(S_{k-1,a_{k-1}})
\end{split}
\end{align}
in $G$ are all connected induced subgraphs of $G$ that contain both $v$ and $v_0$. Thus, their number is precisely $b_0+a_1+\cdots +a_{k-1}$. Since $k>2$, vertices $v_1$ and $v_{k-1}$ are distinct; so the subgraph induced by $V(T_0) \cup V(S_{k-1,1})$ also contains both $v$ and $v_0$.

Now consider those subtrees of $T_0$ that contain $v$ but not $v_0$. Their number is $N(T_0)_v - b_0$, implying that the number of connected induced subgraphs of $G$ that contain $v$ is at least $$
b_0+a_1+\cdots +a_{k-1} +1 + N(T_0)_v - b_0.$$ By Lemma~\ref{rootsubt}, we have $N(T_0)_v \geq |V(T_0)|$ and $a_j \geq |V(T_j)|$ for every $j \in \{0,1,\ldots,k-1 \}$. Therefore, we get
\begin{align}\label{OneDist}
N(G)_v \geq |V(T_1)|+\cdots + |V(T_{k-1})| +1 + |V(T_0)| =n+1.
\end{align}
Suppose $k>3$. Then the subgraph induced by $V(T_0) \cup V(S_{k-1,1}) \cup V(S_{k-2,1})$ is connected and its vertex set is distinct from $V(T_0) \cup V(S_{k-1,1})$ as well as all the sets enumerated in~\eqref{Essent}. Consequently, by the above discussion, equality holds in~\eqref{OneDist} if and only if $$k=3,~a_1=a_2=1,~T_0=P_{|V(T_0)|}~\text{ and }~v_0~\text{is a pendant vertex of}~T_0.$$

On the other hand, since $G-\{v\}$ is a unicyclic graph of order $n-1$, we obtain
$$ N(G-\{v\}) \geq  \frac{(n-2)(n+3)}{2}$$ by the induction hypothesis, with equality holding if and only if $G-\{v\}$ is the tadpole graph $G_{3,n-4}$. Summing up the two contributions yields
$$
N(G)= N(G)_v+  N(G-\{v\}) \geq  n+1 + \frac{(n-2)(n+3)}{2}  = \frac{(n-1)(n+4)}{2}.
$$ Equality holds if and only if $G-\{v\}=G_{3,n-4},~k=3,~|V(T_1)|=|V(T_2)|=1,~ T_0=P_{n-2}~\text{and}~v_0~\text{is a pendant vertex of}~T_0$. In this case, $G$ is indeed isomorphic to $G_{3,n-3}$, completing the proof of the theorem.
\end{proof}

\subsection{The maximum number of connected induced subgraphs}
Before getting to our next and final theorem (the analogue of Theorem~\ref{MinforUnic}), we need to start with a few definitions and intermediate results.

Recall that the star $S_n$ is obtained by attaching $n-1$ pendant vertices to a single vertex, called the central vertex. Denote by $Q_n$ the connected graph obtained by adding one edge between two pendant vertices of the star $S_n,~n>2$. Then $Q_n$ contains only one cycle and its length is $3$. We are going to show that $Q_n$ is maximal with respect to the number of connected induced subgraphs of a unicyclic graph of order $n$. Again, it is worth mentioning that the graph $Q_n$ is known to minimise the Wiener index among all unicyclic graphs of a given order~\cite{hong2011wiener,tang2008n,yu2010wiener}.

\begin{theorem}\label{MaxUnic}
If $G$ is a unicyclic graph of order $n>5$, then $N(G)\leq n+2^{n-1}$, with equality if and only if $G$ is isomorphic to $Q_n$.
\end{theorem}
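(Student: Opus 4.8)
The plan is to compute $N(Q_n)$ directly and then prove the upper bound by induction on $n$, splitting off a pendant vertex. For $Q_n$, write $c$ for the central vertex of the underlying star and $p_1,p_2$ for the two adjacent leaves: a connected induced subgraph either avoids $c$, in which case it is a single vertex or the edge $p_1p_2$ (giving $(n-1)+1=n$ subgraphs), or it contains $c$ and is then automatically connected whatever set of leaves is adjoined (giving $2^{n-1}$ subgraphs); hence $N(Q_n)=n+2^{n-1}$. The cycle is disposed of at once: by Lemma~\ref{FormCycle}, $N(C_n)=n^2-n+1$, and since $(n-1)^2<2^{n-1}$ for $n\geq 6$ we get $N(C_n)<n+2^{n-1}$, so $C_n$ is never extremal. (The identity $(n-1)^2=2^{n-1}$ at $n=5$ is exactly what breaks uniqueness there, which is why the hypothesis is $n>5$.)

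For the inductive step, let $G\neq C_n$ be unicyclic of order $n$ and let $v$ be a pendant vertex of $G$ with unique neighbour $u$; then $G-v$ is unicyclic of order $n-1$. Every connected induced subgraph of $G$ containing $v$ is either $\{v\}$ itself or is obtained by adjoining $v$ to a connected induced subgraph of $G-v$ containing $u$, so $N(G)_v=1+N(G-v)_u$. Since $N(G-v)_u$ is at most the number $2^{n-2}$ of vertex subsets of $G-v$ containing $u$, and since $N(G-v)\leq (n-1)+2^{n-2}$ by the induction hypothesis, one obtains
\[
N(G)=N(G)_v+N(G-v)=1+N(G-v)_u+N(G-v)\leq 1+2^{n-2}+(n-1)+2^{n-2}=n+2^{n-1}.
\]
The reduction takes order $n\geq 6$ down to order $n-1\geq 5$, so I would first establish the (non-strict) inequality $N(G)\leq n+2^{n-1}$ for every unicyclic graph of order $n\geq 5$, the base case $n=5$ being a direct check over the five unicyclic graphs of that order — where $C_5$, $G_{4,1}$ and $Q_5$ all attain the value $21$.

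It remains to pin down equality for $n>5$. Equality in the displayed chain forces both $N(G-v)_u=2^{n-2}$ and $N(G-v)=(n-1)+2^{n-2}$. The first equality holds precisely when every subset of $V(G-v)$ containing $u$ induces a connected subgraph, i.e. when $u$ is adjacent to every other vertex of $G-v$. The second equality, by the induction hypothesis when $n\geq 7$ and by the order-$5$ check when $n=6$, forces $G-v\cong Q_{n-1}$; and the only vertex of $Q_{n-1}$ adjacent to all the others is its centre (as soon as $n-1\geq 4$). Hence $u$ is the centre of $Q_{n-1}$, and re-attaching the pendant $v$ there reconstructs $Q_n$. Together with $N(Q_n)=n+2^{n-1}$, this yields the claimed characterisation.

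I expect the main nuisance to be exactly this boundary bookkeeping: the bound is false at $n=4$ (the cycle $C_4$ wins) and holds only non-uniquely at $n=5$, so the induction must be organised as ``inequality for all $n\geq 5$, then uniqueness for $n\geq 6$'', with the one subtle point that the reduction from $n=6$ lands in the non-unique regime $n=5$ — which is harmless, because among the order-$5$ graphs attaining $21$ only $Q_5$ possesses a vertex adjacent to all the others. Everything else is a routine computation.
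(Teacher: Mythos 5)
Your proof is correct, but it follows a genuinely different route from the paper's. The paper pins down the structure of an extremal graph by a sequence of local exchange arguments on the decomposition of $G$ into its cycle $C_k$ and the branch trees $T_0,\ldots,T_{k-1}$: it first shows each $T_j$ must be a star centred on the cycle (using Proposition~\ref{upperTr} and Theorem~\ref{Szek:Theo}), then that $k=3$, then that only one branch is nontrivial, at the cost of fairly heavy bookkeeping with the auxiliary counts $M_1,M_2,M_3$. You instead induct on $n$ by deleting a pendant vertex $v$ with neighbour $u$, using the exact identity $N(G)=1+N(G-v)_u+N(G-v)$ together with the trivial bound $N(G-v)_u\le 2^{n-2}$ (with equality iff $u$ is adjacent to every other vertex of $G-v$) and the induction hypothesis; the two bounds sum to exactly $n+2^{n-1}$, the cycle is excluded separately via Lemma~\ref{FormCycle}, and your equality analysis correctly navigates the delicate base case $n=5$, where three graphs attain the value $21$ but only $Q_5$ possesses a vertex adjacent to all others. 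Your argument is shorter and more elementary --- it is in fact the natural counterpart of the paper's own proof of the minimum (Theorem~\ref{MinforUnic}), which likewise proceeds by removing a pendant vertex --- whereas the paper's exchange argument is more structural and explains directly why each feature of $Q_n$ (star branches, triangle, a single nontrivial branch) is forced. The only point worth spelling out in a final write-up is the bijection behind $N(G)_v=1+N(G-v)_u$: a connected induced subgraph of order at least $2$ containing the leaf $v$ must contain $u$, and deleting $v$ from it leaves a connected induced subgraph of $G-v$ containing $u$, and conversely.
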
 

It is important to note that since $Q_4=G_{3,1}$ and there are only two unicyclic graphs of order $4$, the cycle $C_4$ is therefore the unicyclic graph of order $4$ with the greatest number of connected induced subgraphs. On the other hand, it is not difficult to see that out of the five unicyclic graphs of order $5$ (see Figure~\ref{FigUni5}), the cycle $C_5$, the graph $Q_5$ and the so-called banner graph $B_5$ (obtained by attaching a pendant edge to a vertex of $C_4$) all have the maximum number of connected induced subgraphs, namely $21$.
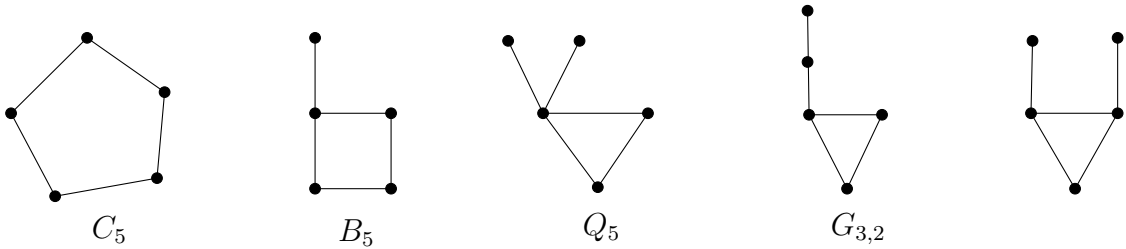
\begin{figure}[htbp]\centering
\begin{tikzpicture}[line cap=round,line join=round,>=triangle 45,x=1.0cm,y=1.0cm]
\draw (1.,6.)-- (2.,7.);
\draw (2.,7.)-- (3.02,6.28);
\draw (3.02,6.28)-- (2.92,5.14);
\draw (2.92,5.14)-- (1.58,4.9);
\draw (1.58,4.9)-- (1.,6.);
\draw (5.,7.)-- (5.,6.);
\draw (5.,6.)-- (6.,6.);
\draw (6.,6.)-- (6.,5.);
\draw (6.,5.)-- (5.,5.);
\draw (5.,5.)-- (5.,6.);
\draw (8.72,5.02)-- (8.,6.);
\draw (8.,6.)-- (9.38,6.);
\draw (9.38,6.)-- (8.72,5.02);
\draw (8.,6.)-- (7.54,6.96);
\draw (8.,6.)-- (8.48,6.96);
\draw (12.,5.)-- (11.5,5.98);
\draw (12.,5.)-- (12.46,5.98);
\draw (11.5,5.98)-- (12.46,5.98);
\draw (11.5,5.98)-- (11.48,7.36);
\draw (15.,5.)-- (14.42,6.);
\draw (15.,5.)-- (15.56,6.);
\draw (14.42,6.)-- (15.56,6.);
\draw (14.42,6.)-- (14.44,6.96);
\draw (15.56,6.)-- (15.56,7.);
\draw (1.92,4.78) node[anchor=north west] {$C_5$};
\draw (5.16,4.74) node[anchor=north west] {$B_5$};
\draw (8.38,4.82) node[anchor=north west] {$Q_5$};
\draw (11.64,4.82) node[anchor=north west] {$G_{3,2}$};

\draw [fill=black] (2.,7.) circle (2.0pt);
\draw [fill=black] (1.,6.) circle (2.0pt);
\draw [fill=black] (3.02,6.28) circle (2.0pt);
\draw [fill=black] (1.58,4.9) circle (2.0pt);
\draw [fill=black] (2.92,5.14) circle (2.0pt);
\draw [fill=black] (5.,7.) circle (2.0pt);
\draw [fill=black] (5.,6.) circle (2.0pt);
\draw [fill=black] (6.,6.) circle (2.0pt);
\draw [fill=black] (5.,5.) circle (2.0pt);
\draw [fill=black] (6.,5.) circle (2.0pt);
\draw [fill=black] (8.,6.) circle (2.0pt);
\draw [fill=black] (8.72,5.02) circle (2.0pt);
\draw [fill=black] (9.38,6.) circle (2.0pt);
\draw [fill=black] (7.54,6.96) circle (2.0pt);
\draw [fill=black] (8.48,6.96) circle (2.0pt);
\draw [fill=black] (12.,5.) circle (2.0pt);
\draw [fill=black] (11.5,5.98) circle (2.0pt);
\draw [fill=black] (12.46,5.98) circle (2.0pt);
\draw [fill=black] (11.48,6.68) circle (2.0pt);
\draw [fill=black] (11.48,7.36) circle (2.0pt);
\draw [fill=black] (15.,5.) circle (2.0pt);
\draw [fill=black] (14.42,6.) circle (2.0pt);
\draw [fill=black] (15.56,6.) circle (2.0pt);
\draw [fill=black] (14.44,6.96) circle (2.0pt);
\draw [fill=black] (15.56,7.) circle (2.0pt);
\end{tikzpicture}
\caption{All the unicyclic graphs of order $5$.}\label{FigUni5}
\end{figure}

We begin with a counterpart of Lemma~\ref{rootsubt}.

\begin{proposition}\label{upperTr}
Let $T$ be a rooted tree of order $n$ whose root is $v$. Then the number of $v$-containing subtrees of $T$ is at most $2^{n-1}$. Equality holds if and only if $T$ is the star $S_n$ and $v$ is the center of $T$.
\end{proposition}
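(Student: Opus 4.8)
The plan is to obtain the upper bound from an immediate counting argument and then to settle the equality case by a short structural dichotomy. First I would recall the basic fact, used implicitly throughout the paper, that in a tree every connected subgraph coincides with the subgraph induced by its own vertex set: a connected subgraph spanning a vertex set $W$ is contained in the forest $T[W]$, and since it spans $W$ it must equal $T[W]$, which is then a tree. Hence a $v$-containing subtree of $T$ is completely determined by its vertex set, which is a subset of $V(T)$ containing $v$. Since there are exactly $2^{n-1}$ such subsets, $N(T)_v \le 2^{n-1}$ follows at once.

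For equality, observe that the previous count gives $N(T)_v = 2^{n-1}$ precisely when \emph{every} subset $W \subseteq V(T)$ with $v \in W$ induces a connected subgraph of $T$. If $v$ is adjacent in $T$ to each of the other $n-1$ vertices, then $T$ has $n-1$ edges all incident with $v$, so $T$ is the star $S_n$ with center $v$; and for this $T$ every vertex subset containing $v$ induces a (star, hence connected) subgraph, so equality indeed holds. Conversely, if $v$ is not adjacent to some vertex $u \neq v$, then $W = \{v,u\}$ induces an edgeless, hence disconnected, subgraph, so equality fails. Therefore $N(T)_v = 2^{n-1}$ if and only if $T \cong S_n$ with $v$ its center.

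In closer analogy with Lemma~\ref{rootsubt}, one can also argue by induction on $n$: writing $u_1,\dots,u_d$ for the children of $v$ and $T_1,\dots,T_d$ for the branches rooted at them, with $|V(T_i)| = n_i$ and $\sum_{i} n_i = n-1$, one has $N(T)_v = \prod_{i=1}^{d}\bigl(1 + N(T_i)_{u_i}\bigr)$; the inductive bound $N(T_i)_{u_i} \le 2^{n_i-1}$ together with the elementary inequality $1 + 2^{m-1} \le 2^{m}$ (valid for all $m \ge 1$, with equality iff $m = 1$) yields $N(T)_v \le \prod_i 2^{n_i} = 2^{n-1}$, and equality forces every $n_i = 1$, i.e. $T \cong S_n$ centered at $v$. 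There is no genuine obstacle here: the bound is immediate once a subtree is seen to be pinned down by its vertex set, and the only point needing a little care is the equality characterisation, where one must correctly identify "$v$ adjacent to all vertices" with "$T$ is the star centered at $v$" and note that a single $2$-element disconnected set rules out every non-star.
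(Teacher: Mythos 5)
Your main argument is exactly the paper's proof: count the $2^{n-1}$ vertex subsets containing $v$, and for non-stars (or stars not centered at $v$) exhibit a vertex $u$ non-adjacent to $v$ so that $\{v,u\}$ induces a disconnected subgraph. The supplementary inductive argument via $N(T)_v=\prod_i\bigl(1+N(T_i)_{u_i}\bigr)$ is also correct but not needed.
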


\begin{proof}
There are $2^{n-1}$ subsets of vertices of $T$ that contain $v$. Thus $N(T)_v \leq 2^{n-1}$. If $T$ is a star whose center is $v$, then all $2^{n-1}$ such subsets induce a connected subgraph of $T$. Otherwise, there is a vertex $u$ of $T$ at distance at least $2$ from $v$. In this case, the set $\{u, v\}$ does not induce a connected subgraph of $T$.	
\end{proof}

In general, we define the banner graph $B_n$ of order $n\geq 4$ to be the graph constructed from $C_4$ by adding $n-4$ pendant edges to the same vertex of $C_4$.

\begin{lemma}\label{FormBanner}
The banner graph $B_n$ has $2+n+7\cdot 2^{n-4}$ connected induced subgraphs.
\end{lemma}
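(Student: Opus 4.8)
The plan is to count connected induced subgraphs of $B_n$ by splitting on whether they contain the central vertex $c$ (the vertex of the $C_4$ to which the $n-4$ pendant edges are attached). Label the $C_4$ as $c, x, y, z$ with $c$ adjacent to $x$ and $z$, and $x$ adjacent to $y$ adjacent to $z$; call the pendant vertices $w_1,\dots,w_{n-4}$, each adjacent only to $c$.

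First I would count the connected induced subgraphs \emph{not} containing $c$. Deleting $c$ from $B_n$ leaves the path $x\!-\!y\!-\!z$ together with $n-4$ isolated vertices $w_i$. The only connected induced subgraphs of this forest are the single vertices ($n-1$ of them) and the connected subpaths of $x\!-\!y\!-\!z$ of order $\geq 2$, of which there are $2$ (namely $\{x,y\}$, $\{y,z\}$, $\{x,y,z\}$ — three, actually). So this contributes $N(P_{3}) + (n-4)\cdot N(P_1)$ worth of subgraphs that avoid all the $w_i$... more cleanly: the subgraphs avoiding $c$ are exactly those living in $\{x,y,z\}$ forming a subpath ($N(P_3)=\binom{4}{2}=6$) plus the $n-4$ singletons $\{w_i\}$, giving $6 + (n-4) = n+2$.

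Next I would count the $c$-containing connected induced subgraphs. Such a subgraph is determined by an arbitrary subset of $\{w_1,\dots,w_{n-4}\}$ (all $w_i$ are independently attachable once $c$ is present) together with a $c$-containing connected induced subgraph of the $C_4$ on $\{c,x,y,z\}$. By the argument in the proof of Lemma~\ref{FormCycle}, the number of $c$-containing connected induced subgraphs of $C_4$ is $N(C_4)_{c} = (4^2 - 4 + 1) - \binom{4}{2} = 13 - 6 = 7$. Since the two choices are independent, the number of $c$-containing connected induced subgraphs of $B_n$ is $7 \cdot 2^{n-4}$. Adding the two contributions gives $N(B_n) = (n+2) + 7\cdot 2^{n-4} = 2 + n + 7\cdot 2^{n-4}$, as claimed. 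I would finally sanity-check against the case $n=5$: the formula gives $2 + 5 + 7\cdot 2 = 21$, matching the value $21$ stated for $B_5$ in the discussion after Theorem~\ref{MaxUnic}, and against $n=4$ ($B_4 = C_4$): $2 + 4 + 7 = 13 = N(C_4)$, consistent with Lemma~\ref{FormCycle}.

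There is no real obstacle here — the only point requiring the tiniest bit of care is the claim that a $c$-containing connected induced subgraph factors as a free choice of pendants times a $c$-containing connected induced subgraph of $C_4$; this is immediate because each $w_i$ is a leaf adjacent only to $c$, so including or excluding $w_i$ never affects connectivity of the rest, and conversely the restriction to $\{c,x,y,z\}$ must itself be connected and contain $c$. One should also note explicitly why the count of $c$-containing connected subgraphs of $C_4$ equals $7$: by Lemma~\ref{FormCycle} the cycle $C_4$ has $n^2-n+1 = 13$ connected induced subgraphs, exactly $\binom{4}{2}=6$ of which (the subpaths of order $\le 3$, i.e. sizes $1$ through $3$ less those forced through $c$... ) — more directly, of the $13$, those avoiding $c$ are the connected induced subgraphs of $C_4 - c = P_3$, numbering $N(P_3) = 6$, so $13 - 6 = 7$ contain $c$.
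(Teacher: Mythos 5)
Your proof is correct and follows essentially the same route as the paper: split on whether the central vertex $v$ of the $C_4$ is present, count the $v$-avoiding subgraphs as $N(P_3)+(n-4)=n+2$, and factor each $v$-containing subgraph as an independent choice of pendant neighbours ($2^{n-4}$) times a $v$-containing connected induced subgraph of $C_4$ ($7$ of them). The only difference is cosmetic: the paper packages the pendant choices as $N(S_{n-3})_v=2^{n-4}$ via Proposition~\ref{upperTr}, whereas you count the subsets directly.
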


\begin{proof}
The statement is seen to hold for $n=4$ as $B_4=C_4$ and $N(C_4)=13$ by Lemma~\ref{FormCycle}. For $n>4$, let $v$ be the neighbor of a pendant vertex of $B_n$ (see Figure~\ref{GraphBn}). Deleting $v$ in $B_n$ yields $n-4$ copies of $P_1$ and one copy of $P_3$. So $N(B_n-\{v\})= n+2$ as $N(P_m)=\binom{m+1}{2}$ by Theorem~\ref{MaxNk}.
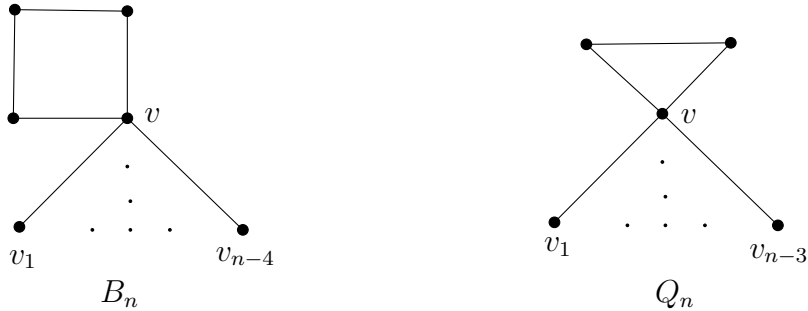
\begin{figure}[htbp]\centering
\begin{tikzpicture}[line cap=round,line join=round,>=triangle 45,x=1.0cm,y=1.0cm]
\draw (3.02,8.44)-- (4.5,8.42);
\draw (4.5,8.42)-- (4.5,7.);
\draw (4.5,7.)-- (3.,7.);
\draw (3.,7.)-- (3.02,8.44);
\draw (4.5,7.)-- (3.08,5.56);
\draw (4.5,7.)-- (6.02,5.52);
\draw (4.58,7.28) node[anchor=north west] {$v$};
\draw (2.8,5.4) node[anchor=north west] {$v_1$};
\draw (5.52,5.44) node[anchor=north west] {$v_{n-4}$};
\draw (4.,5) node[anchor=north west] {$B_n$};

\draw (10.54,7.98)-- (12.44,8.);
\draw (12.44,8.)-- (11.54,7.06);
\draw (11.54,7.06)-- (10.12,5.62);
\draw (11.54,7.06)-- (13.06,5.58);
\draw (11.64,7.27) node[anchor=north west] {$v$};
\draw (9.84,5.56) node[anchor=north west] {$v_1$};
\draw (12.54,5.48) node[anchor=north west] {$v_{n-3}$};
\draw (11.54,7.06)-- (10.54,7.98);

\draw (11.3,5) node[anchor=north west] {$Q_n$};
\draw [fill=black] (3.02,8.44) circle (2.0pt);
\draw [fill=black] (4.5,8.42) circle (2.0pt);
\draw [fill=black] (3.,7.) circle (2.0pt);
\draw [fill=black] (4.5,7.) circle (2.0pt);
\draw [fill=black] (3.08,5.56) circle (2.0pt);
\draw [fill=black] (6.02,5.52) circle (2.0pt);
\draw [fill=black] (4.04,5.52) circle (0.5pt);
\draw [fill=black] (4.54,5.52) circle (0.5pt);
\draw [fill=black] (5.06,5.52) circle (0.5pt);
\draw [fill=black] (4.5,6.36) circle (0.5pt);
\draw [fill=black] (4.54,5.9) circle (0.5pt);
\draw [fill=black] (10.54,7.98) circle (2.0pt);
\draw [fill=black] (12.44,8.) circle (2.0pt);
\draw [fill=black] (11.54,7.06) circle (2.0pt);
\draw [fill=black] (10.12,5.62) circle (2.0pt);
\draw [fill=black] (13.06,5.58) circle (2.0pt);
\draw [fill=black] (11.08,5.58) circle (0.5pt);
\draw [fill=black] (11.58,5.58) circle (0.5pt);
\draw [fill=black] (12.1,5.58) circle (0.5pt);
\draw [fill=black] (11.54,6.42) circle (0.5pt);
\draw [fill=black] (11.58,5.96) circle (0.5pt);
\end{tikzpicture}
\caption{The unicyclic graphs $B_n$ (left) and $Q_n$ (right) of order $n$.}\label{GraphBn}
\end{figure}
On the other hand, every $v$-containing connected induced subgraph of $B_n$ decomposes naturally into a $v$-containing connected induced subgraph of $C_4$ and a $v$-containing connected induced subgraph of $S_{n-3}$ (whose center is $v$). This gives $N(B_n)_v = N(C_4)_v \cdot N(S_n)_v = 7 \cdot 2 ^{n-4}$ by Proposition~\ref{upperTr}, completing the proof.
\end{proof}

To prove Theorem~\ref{MaxUnic}, we shall need one more intermediate result (Proposition~\ref{upperTlr}). In a certain sense, this proposition parallels a result of Sz{\'e}kely and Wang~\cite{szekely2005subtrees}, who proved that the star maximises the number of subtrees that contain at least one leaf of $T$ among all trees $T$ of a given order.

\begin{proposition}\label{upperTlr}
Let $T$ be a rooted tree of order $n>1$ whose root is $v$. If $l \neq v$ is a leaf of $T$, then the number of subtrees of $T$ that contain both $v$ and $l$ is at most $2^{n-2}$. Equality holds if and only if $T$ is the star $S_n$ and $v$ is its center.
\end{proposition}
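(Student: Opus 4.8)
The plan is to argue exactly as in the proof of Proposition~\ref{upperTr}, exploiting the fact that in a tree every subtree is completely determined by its vertex set: a subset $W\subseteq V(T)$ is the vertex set of a (unique) subtree of $T$ precisely when it induces a connected subgraph. Since there are exactly $2^{n-2}$ subsets $W\subseteq V(T)$ with $\{v,l\}\subseteq W$, and distinct subtrees containing both $v$ and $l$ have distinct vertex sets, the number of such subtrees is at most $2^{n-2}$. This settles the bound immediately.

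For the equality case I would first show that $T\cong S_n$ with center $v$ is necessary. If the bound $2^{n-2}$ is attained, then \emph{every} subset $W$ with $\{v,l\}\subseteq W$ must induce a connected subgraph of $T$. Applying this to $W=\{v,l\}$ forces $v$ and $l$ to be adjacent. Now take an arbitrary vertex $u\notin\{v,l\}$ and apply it to $W=\{v,l,u\}$: since $l$ is a leaf its only neighbour is $v$, so $u$ is not adjacent to $l$, and hence connectedness of the subgraph induced by $\{v,l,u\}$ forces $u$ to be adjacent to $v$. As $u$ was arbitrary, every vertex of $T$ is adjacent to $v$, i.e.\ $T$ is the star $S_n$ with center $v$. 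Conversely, if $T=S_n$ with center $v$, then every vertex subset containing $v$ induces a connected (star) subgraph, so in particular all $2^{n-2}$ subsets containing $\{v,l\}$ are counted, giving equality.

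The argument is essentially routine; the only point needing a little care is the role of the hypothesis that $l$ is a leaf in the equality analysis — it is exactly what guarantees that a third vertex $u$ must attach to $v$ rather than being able to hide behind $l$. (In the degenerate case $n=2$ one simply notes $2^{n-2}=1$, the unique subtree containing both $v$ and $l$ is $P_2=S_2$ itself, and $v$ is its center, so the statement holds trivially.)
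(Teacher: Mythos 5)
Your proof is correct and follows essentially the same approach as the paper: bound the count by the $2^{n-2}$ vertex subsets containing $\{v,l\}$, then analyse the equality case via the subsets $\{v,l\}$ and $\{v,l,u\}$, using that $l$ is a leaf. The paper phrases the equality analysis in the contrapositive (exhibiting a disconnected subset when $T$ is not a star centred at $v$), but the substance is identical.
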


\begin{proof}
The statement is trivial for $n=2$. Assume that $n>2$. There are precisely $2^{n-2}$ subsets of vertices of $T$ that contain $v$ and $l$, showing that $N(T)_{v,l}\leq 2^{n-2}$. If $T$ is a star whose center is $v$, then all such subsets of vertices induce a connected subgraph, meaning that $N(S_n)_{v,l}= 2^{n-2}$. If $T$ is a star but $v$ is not its center, then the graph induced by $\{v,l\}$ is not connected. It remains to rule out the case where $T$ is not a star.

Assume that $T$ is not a star. Then there exists a vertex $w \in V(T)$ at distance $2$ from $v$. If $l$ is not adjacent to $v$, then the graph induced by $\{v,l\}$ is not connected. Otherwise, $l$ and $v$ are adjacent, and since $l$ is a leaf, the graph induced by $\{v,l,w\}$ is not connected. This completes the proof of the proposition.
\end{proof}

We are now ready to give a proof of the main result (Theorem~\ref{MaxUnic}).
\begin{proof}[Proof of Theorem~\ref{MaxUnic}]
For $n>3$, let $v$ be the unique vertex of $Q_n$ whose degree is at least $3$ (see Figure~\ref{GraphBn}). Fix a vertex $v_1$ of $P_2$ and a vertex $v_2$ of $C_3$. We can then construct all $v$-containing connected induced subgraphs of $Q_n$ as follows: take $n-3$ connected induced subgraphs of $P_2$ that contain $v_1$, one connected induced subgraph of $C_3$ that contains $v_2$, and identify both $v_1$ and $v_2$ with $v$. This gives us $N(Q_n)_v=4\cdot 2^{n-3}=2^{n-1}$. On the other hand, deleting $v$ in $Q_n$ yields $n-3$ copies of $P_1$ and one copy of $P_2$, meaning that $N(Q_n-\{v\})=n$. This proves the identity $N(Q_n)=2^{n-1}+n$.

By Lemma~\ref{FormBanner}, it is obvious that $N(B_n)=2+n + 7 \cdot 2^{n-4} < n+2^{n-1}=N(Q_n)$ if $n>5$. Let $G$ be a graph having the maximum number of connected induced subgraphs among all unicyclic graphs of order $n> 5$. For the rest of the proof, we then assume that $G$ is not a banner graph. Let $C_k=(v_0,v_1,\ldots,v_{k-1},v_k=v_0)$ be the unique cycle of $G$. Then $G$ has precisely the shape depicted in Figure~\ref{ShapeUnic}, where $T_0,T_1,\ldots,T_{k-1}$ are all trees rooted at vertices $v_0,v_1,\ldots,v_{k-1}$, respectively. 

\textbf{Claim 1}: \textit{For all $j \in \{0,1,\ldots,k-1\}$, the tree $T_j$ is a star with center $v_j$.}

For the proof of the claim, suppose (without loss of generality) that $T_0$ is not a star, or that $T_0$ is a star but $v_0$ is not its center. Let us first derive a formula for the number $N(G;T_0)$ of connected induced subgraphs of $G$ that contain a vertex of $T_0$. We partition these subgraphs into two categories:
\begin{itemize}
\item those that are induced entirely by a subset of vertices of $T_0$,
\item those that contain at least a vertex in $V(G)-V(T_0)$.
\end{itemize}
Let $M$ denote the number of connected induced subgraphs of $G-(V(T_0)-\{v_0\})$ that contain $v_0$ and at least one other vertex. Then the number of connected induced subgraphs of $G$ that contain a vertex of $T_0$ and at least a vertex in $V(G)-V(T_0)$ is $M \cdot N(T_0)_{v_0}$. Therefore, we have
$$
N(G;T_0)=N(T_0) + M \cdot N(T_0)_{v_0}\,.
$$
Construct from $G$ a new unicyclic graph $G^{\prime}$ by replacing $T_0$ with the star $T_0^{\prime}=S_{|V(T_0)|}$ centered at vertex $v_0$. Thus, we have
$$
N(G^{\prime}; T_0^{\prime})=N(T_0^{\prime}) +  M \cdot N(T_0^{\prime})_{v_0}
$$ as $G-(V(T_0)-\{v_0\})$ and $G^{\prime}-(V(T_0^{\prime})-\{v_0\})$ are isomorphic. The difference gives
$$
N(G^{\prime}; T_0^{\prime}) - N(G;T_0) =N(T_0^{\prime}) - N(T_0) + M (N(T_0^{\prime})_{v_0} -  N(T_0)_{v_0}).
$$
By Proposition~\ref{upperTr}, we have $N(T_0^{\prime})_{v_0}- N(T_0)_{v_0}  >0$, while it is also known that $N(T_0^{\prime})- N(T_0) \geq  0$; see Theorem~\ref{Szek:Theo}. Hence, we get $N(G^{\prime}; T_0^{\prime}) >  N(G;T_0)$ as $M>0$ by definition. Note that $G - V(T_0)$ and $G^{\prime} - V(T_0^{\prime})$ are isomorphic, so they have the same number of connected induced subgraphs. We then deduce that $N(G^{\prime}) > N(G)$, a contradiction. 

\medskip
In the following, we assume that each of the trees $T_0,T_1,\ldots,T_{k-1}$ is a star centered at vertices $v_0,v_1,\ldots,v_{k-1}$, respectively. We can also assume that $T_0$ has the maximum order among the trees $T_0,T_1,\ldots,T_{k-1}$. 

\textbf{Claim 2}: \textit{We have $k=3$}.

For the proof of this claim, suppose (for contradiction) that $k>3$. Let us first establish a formula for the number $N(G; T_0 \cup T_{k-1})$ of connected induced subgraphs of $G$ that contain a vertex in $V(T_0) \cup V(T_{k-1})$. To this end, we introduce the following notation:
\begin{itemize}
\item $M_1$ is the number of connected induced subgraphs of $G-\big( V(T_{k-1}) \cup V(T_0) -\{v_0\}\big)$ that contain $v_0$ and at least one other vertex;
\item $M_2$ is the number of connected induced subgraphs of $G-\big( V(T_0) \cup V(T_{k-1})-\{v_{k-1}\} \big)$ that contain $v_{k-1}$ and at least one other vertex;
\item $M_3$ is the number of connected induced subgraphs of $G-\big( V(T_0) \cup V(T_{k-1})-\{v_{k-1}, v_0\} \big)$ that contain both $v_0$ and $v_{k-1}$.
\end{itemize}
With this notation, the number of connected induced subgraphs of $G$ that contain
\begin{enumerate}[(i)]
\item a vertex of $T_0$ but no vertex of $T_{k-1}$ is given by
$$
N(T_0)  + M_1 \cdot N(T_0)_{v_0};$$
\item a vertex of $T_{k-1}$ but no vertex of $T_0$ is given by
$$
N(T_{k-1}) ~ + M_2 \cdot N(T_{k-1})_{v_{k-1}};$$
\item a vertex of $T_0$ and a vertex of $T_{k-1}$ is given by
$$
M_3 \cdot  N(T_0)_{v_0} \cdot  N(T_{k-1})_{v_{k-1}}.$$
\end{enumerate}
Combining all three cases, we obtain 
\begin{align}\label{OneG}
\begin{split}
 N(G; T_0 \cup T_{k-1})& =N(T_0) + N(T_{k-1}) + M_1 \cdot N(T_0)_{v_0}  + M_2 \cdot N(T_{k-1})_{v_{k-1}} \\
 &+  M_3 \cdot N(T_0)_{v_0} \cdot N(T_{k-1})_{v_{k-1}}\,.
 \end{split}
\end{align}

Construct from $G$ a new unicyclic graph $G^{\prime}$ by deleting all vertices of $T_{k-1}$ except $v_{k-1}$, then contracting vertex $v_{k-1}$, and finally replacing $T_0$ with the star $T_0^{\prime}=S_{|V(T_0)|+|V(T_{k-1})|}$ centered at vertex $v_0$. Denote by $M^{\prime}$ the number of connected induced subgraphs of $G^{\prime}-(V(T_0^{\prime})-\{v_0\})$ that contain $v_0$ and at least one other vertex. Thus, we obtain
\begin{align}\label{OneGPrimePrime}
 N(G^{\prime}; T_0^{\prime})  =N(T_0^{\prime}) + M^{\prime} \cdot N(T_0^{ \prime})_{v_0}
\end{align}
for the number $ N(G^{\prime}; T_0^{\prime})$ of connected induced subgraphs of $G^{\prime}$ that contain a vertex of $T_0^{\prime}$. Note that the quantities $M_1,M_2,M_3, M^{\prime}$ are explicitly given by 
\begin{align*}
M^{\prime}&=M_3 -1\,,\\
M_1&=\sum_{l=1}^{k-2} \prod_{j=1}^l N(T_j)_{v_j}\,,~ M_2=\sum_{r=2}^{k-1} \prod_{j=2}^r N(T_{k-j})_{v_{k-j}}\,,\\
M_3&=1+  M_1 + \sum_{r=2}^{k-2} \prod_{j=2}^r N(T_{k-j})_{v_{k-j}} + \sum_{l=1}^{k-4}~\sum_{r=2}^{k-l-2} \prod_{j=1}^l N(T_j)_{v_j}~\prod_{j=2}^r N(T_{k-j})_{v_{k-j}}\,.
\end{align*} 
Therefore, the difference~\eqref{OneGPrimePrime}-\eqref{OneG} gives
\begin{align}\label{DifferencePrimeP}
\begin{split}
&N(G^{\prime};  T_0^{\prime})  - N(G;  T_0 \cup   T_{k-1})=N(T_0^{\prime}) -N(T_0) - N(T_{k-1}) - N(T_0)_{v_0} N(T_{k-1})_{v_{k-1}}\\
& + \big(N(T_0^{\prime })_{v_0} -   N(T_0)_{v_0} (1+ N(T_{k-1})_{v_{k-1}}) \big) M_1\\
& + \big(N(T_0^{\prime})_{v_0} - N(T_{k-1})_{v_{k-1}} (1+ N(T_0)_{v_0} ) \big) \sum_{r=2}^{k-2} \prod_{j=2}^r N(T_{k-j})_{v_{k-j}}\\
& + (N(T_0^{\prime })_{v_0} -  N(T_0)_{v_0} N(T_{k-1})_{v_{k-1}})\sum_{l=1}^{k-5} \sum_{r=2}^{k-l-2}  \prod_{j=1}^l N(T_j)_{v_j}  ~ \prod_{j=2}^r N(T_{k-j})_{v_{k-j}} \\
& + (N(T_0^{\prime })_{v_0} -  N(T_0)_{v_0} N(T_{k-1})_{v_{k-1}} - N(T_{k-3})_{v_{k-3}} N(T_{k-1})_{v_{k-1}} ) \prod_{\substack{j=1 \\ j \neq k-3}}^{k-2} N(T_j)_{v_j}
\end{split}
\end{align}
after some basic manipulations. To simplify notation, we set $n_j=|V(T_j)|$ for every $j \in \{0,1,\ldots,k-1\}$.
	
By Proposition~\ref{upperTr} and Theorem~\ref{Szek:Theo}, we have
\begin{align*}
 N(T_0^{\prime}) & -  N(T_0) -  N(T_{k-1}) - N(T_0)_{v_0} \cdot N(T_{k-1})_{v_{k-1}} \\
&=(2^{n_0+n_{k-1} -1}+n_0+n_{k-1} -1 ) - (2^{n_0-1} +n_0 -1 )\\
& - (2^{n_{k-1} -1}+n_{k-1}  -1 ) - 2^{n_0-1}\cdot 2^{n_{k-1}-1}\\
& = (2^{n_{k-1}-1} -1) (2^{n_0-1} -1 ) \geq 0
\end{align*}
and
\begin{align*}
N(T_0^{\prime })_{v_0} - N(T_0)_{v_0} \big(1+ N(T_{k-1})_{v_{k-1}} \big)&=2^{n_0+n_{k-1} -1} - 2^{n_0-1}(1+ 2^{n_{k-1}-1} )\\
&=2^{n_0 -1} (2^{n_{k-1}-1} - 1 ) \geq 0\,.
\end{align*}
Likewise,
\begin{align*}
N(T_0^{\prime })_{v_0} - N(T_{k-1})_{v_{k-1}} \big(1+ N(T_0)_{v_0}  \big)&=2^{n_0+n_{k-1} -1} - 2^{n_{k-1}-1} (1+ 2^{n_0-1} )\\
&=2^{n_{k-1} -1} (2^{n_0-1} - 1) \geq 0
\end{align*}
and 
\begin{align*}
N(T_0^{\prime})_{v_0} - N(T_0)_{v_0} N(T_{k-1})_{v_{k-1}}&=2^{n_0+n_{k-1} -1} - 2^{n_0-1} \cdot 2^{n_{k-1} -1}\\
&=2^{n_0+n_{k-1} -2} > 0\,.
\end{align*}
Also, we have  
\begin{align*}
N(T_0^{\prime })_{v_0} -  N(T_0)_{v_0} & N(T_{k-1})_{v_{k-1}} - N(T_{k-3})_{v_{k-3}} N(T_{k-1})_{v_{k-1}}\\
&=2^{n_{k-1} -2} (2^{n_0}-2^{n_{k-3}}) \geq 0
\end{align*}
as $n_0 \geq n_{k-3}$ by assumption. The following conclusions about~\eqref{DifferencePrimeP} can be derived immediately:
\begin{itemize}
\item If $k\geq 6$, then $N(G^{\prime };  T_0^{\prime})  >  N(G;  T_0 \cup   T_{k-1})$;
\item If $k=5$, then
\begin{align*}
0 \leq N(G^{\prime };  T_0^{\prime })  - & N(G;  T_0 \cup   T_4)=(2^{n_4-1} -1) (2^{n_0-1} -1 )\\
&  + 2^{n_0 -1} (2^{n_4-1} - 1) \sum_{l=1}^3 \prod_{j=1}^l N(T_j)_{v_j}\\
&+ 2^{n_4 -1} (2^{n_0-1} - 1) \sum_{r=2}^3 \prod_{j=2}^r N(T_{5-j})_{v_{5-j}}\\
& + 2^{n_4 -2} (2^{n_0}-2^{n_2}) N(T_1)_{v_1}  N(T_3)_{v_3}\,.
\end{align*}
Thus, $N(G^{\prime };  T_0^{\prime })  >  N(G;  T_0 \cup   T_{k-1})$ provided that $n_0 >1$. This is indeed the case since $G \neq C_5$ and $T_0$ was chosen to have the maximum order among the trees $T_0,T_1,\ldots,T_{k-1}$.
\item If $k=4$, then
\begin{align*}
0 \leq N(G^{\prime };  T_0^{\prime })  - & N(G;  T_0 \cup   T_3)=(2^{n_3-1} -1 ) (2^{n_0-1} -1 )\\
&  + 2^{n_0 -1} (2^{n_3-1} - 1 ) \sum_{l=1}^2 \prod_{j=1}^l N(T_j)_{v_j}\\
&+ 2^{n_3 -1} (2^{n_0} - 1 -2^{n_1-1} ) N(T_2)_{v_2}\,.
\end{align*}
It is easy to see that $N(G;  T_0^{\prime })  -  N(G;  T_0 \cup   T_3)=0$ if and only if $n_0=n_1=n_3=1$.
\end{itemize}

Now, observe that the graphs $G-(V(T_0) \cup V(T_{k-1}))$ and $G^{\prime} -V(T_0^{\prime} ) $ are isomorphic, so they have the same number of connected induced subgraphs. Altogether, we conclude that $k= 3$, completing the proof of the claim.

\medskip
The specialisation $k=3$ equation~\eqref{OneG} yields
\begin{align*}
 N(G; T_0 \cup   T_2) & =N(T_0) + N(T_2) ~ + N(T_0)_{v_0} N(T_1)_{v_1} + N(T_2)_{v_2} N(T_1)_{v_1}\\
 & +  N(T_0)_{v_0} N(T_2)_{v_2} (1 + N(T_1)_{v_1})\,.
\end{align*}
Recall that $T_0$ has the maximum order among the trees $T_0,T_1,T_2$.

\textbf{Claim 3}: \textit{We have $n_1=n_2=1$}.

To see this, suppose (for contradiction) that $n_2>1$. Construct from $G$ a new unicyclic graph $G^{\prime}$ by replacing both $T_2$ with the star $T_2^{\prime}=S_{n_2-1}$ centered at vertex $v_2$, and $T_0$ with the star $T_0^{\prime}=S_{n_0+1}$ centered at vertex $v_0$. Thus, the number $N(G^{\prime}; T_0^{\prime} \cup   T_2^{\prime })$ of connected induced subgraphs of $G^{\prime}$ that contain a vertex of $T_0^{\prime}$ or $T_2^{\prime}$ is given by 
\begin{align*}
 N(G^{\prime}; T_0^{\prime} \cup   T_2^{\prime}) & =N(T_0^{\prime}) + N(T_2^{\prime}) ~ + N(T_0^{\prime})_{v_0} N(T_1)_{v_1} + N(T_2^{\prime})_{v_2} N(T_1)_{v_1}\\
 & +  N(T_0^{\prime})_{v_0} N(T_2^{\prime})_{v_2} (1 + N(T_1)_{v_1})\,,
\end{align*}
which implies that
\begin{align*}
N(G^{\prime}; T_0^{\prime} \cup   T_2^{\prime}) & - N(G; T_0 \cup   T_2) =N(T_0^{\prime}) + N(T_2^{\prime})  - N(T_0) - N(T_2) \\
& + ( N(T_0^{\prime})_{v_0} + N(T_2^{\prime})_{v_2} - N(T_0)_{v_0} - N(T_2)_{v_2} ) N(T_1)_{v_1}\\
& +  (N(T_0^{\prime})_{v_0} N(T_2^{\prime})_{v_2} - N(T_0)_{v_0} N(T_2)_{v_2}) (1 + N(T_1)_{v_1})\,.
\end{align*}
Again Proposition~\ref{upperTr} along with Theorem~\ref{Szek:Theo} gives
\begin{align*}
N(T_0^{\prime}) + N(T_2^{\prime})  - N(T_0) - N(T_2)=2^{n_0-1} - 2^{n_2-2}\,,\\
N(T_0^{\prime})_{v_0} + N(T_2^{\prime})_{v_2} - N(T_0)_{v_0} - N(T_2)_{v_2}=2^{n_0-1} - 2^{n_2-2}
\end{align*}
and
\begin{align*}
N(T_0^{\prime})_{v_0} N(T_2^{\prime})_{v_2} - N(T_0)_{v_0} N(T_2)_{v_2}=0\,.
\end{align*}
Therefore, we get
\begin{align*}
N(G^{\prime}; T_0^{\prime} \cup   T_2^{\prime}) & - N(G; T_0 \cup   T_2) =(2^{n_0-1} - 2^{n_2-2}) (1+ N(T_1)_{v_1}) > 0
\end{align*}
which completes the proof of the claim. This also completes the proof of the theorem.
\end{proof}

\section{Concluding comments}\label{conclude}
In this paper, we have studied the problem of finding a connected graph that minimises or maximises the number of connected induced subgraphs, as an extension of the number of subtrees of trees. The problem is considered for two different families of graphs, namely general connected graphs and (connected) unicyclic graphs. It is shown that the path uniquely realises not only the minimum number of connected induced subgraphs among all connected graphs of order $n$, but also the minimum number of $k$-vertex connected induced subgraphs. For all $k \in \{3,4,\ldots,n-1\}$, the maximum number of $k$-vertex connected induced subgraphs among all connected graphs of order $n$, however, is shown to be attained by all graphs obtained by removing some independent edges from the complete graph of order $n$. Considering the family of unicyclic graphs of order $n>5$, we have proved that the minimum number of connected induced subgraphs is uniquely attained by the tadpole graph $G_{3,n-3}$, while the graph resulting from adding one edge between two leaves of the star of order $n$ uniquely realises the maximum. Interestingly enough, the graphs with the minimum (resp. maximum) number of connected induced subgraphs were previously shown to maximise (resp. minimise) the Wiener index (sum of distances between all unordered pairs of vertices), in all families that we have investigated.

\medskip
Our results can also be generalised to graphs of order $n$ with $r$ connected components: in particular, the extremal graphs can be obtained. It is not difficult to see that the minimum number of connected induced subgraphs is uniquely realised by the disjoint union of the $r$ paths whose orders are all as equal as possible and sum to $n$, while the maximum is uniquely attained by the disjoint union of $r-1$ copies of the one-vertex graph and one copy of the complete graph of order $n-r+1$.

Now the following natural questions arise:
\begin{itemize}
\item What are the graphs having the minimum number of connected induced subgraphs among all connected series-reduced graphs (graphs without vertices of degree $2$) of order $n$?
\item Can we find a constructive characterisation of the graphs extremising the number of connected induced subgraphs over all connected graphs with prescribed order $n$ and number of cycles $d>1$? In general, this problem can appear out of reach due to the various ways in which the cycles may intersect in the graph. However, special cases of bicyclic graphs ($d=2$ or $3$) are still of interest. Note that the case $d=0$ corresponds to trees while the case $d=1$ has been treated in this paper.
\end{itemize}

\section*{Acknowledgements} 
The author expresses his appreciations to the referee for his/her careful reading of the manuscript and valuable comments, which have improved the presentation of this paper. The author is also indebted to the mathematics division of Stellenbosch University which was the starting point of this work.

\end{document}